\RequirePackage[hyphens]{url}
\documentclass[a4paper,12pt]{article}

\usepackage{authblk}

\usepackage[utf8]{inputenc} 
\usepackage[T1]{fontenc}    
\usepackage{indentfirst}
\usepackage{lmodern}

\usepackage{mathtools}
\usepackage{verbatim}
\usepackage{booktabs}       
\usepackage{amsfonts}       
\usepackage{amsmath}
\usepackage{amssymb}
\usepackage{amsthm}
\usepackage{microtype}      
\usepackage[a4paper]{geometry} 
\usepackage{enumitem}
\usepackage{xcolor}

\PassOptionsToPackage{hyphens}{url}\usepackage[linktocpage=true]{hyperref}

\hypersetup{
           breaklinks=true,   
           colorlinks=true,   
           linkcolor=blue,
           pdfusetitle=true,  
        }

\DeclareMathOperator{\Tr}{Tr}
\DeclareMathOperator{\Gram}{Gram}
\DeclareMathOperator{\Span}{span}
\DeclareMathOperator{\codim}{codim}
\usepackage{graphicx}
\title{Path-connectedness and topological closure of some sets related to the non-compact Stiefel manifold}

\author{Nizar El Idrissi, Samir Kabbaj, and Brahim Moalige}

\newcommand{\Addresses}{{
  \bigskip
  \footnotesize

  \textbf{Nizar El Idrissi.}
  \par\nopagebreak Laboratoire : Equations aux dérivées partielles, Algèbre et Géométrie spectrales.
  \par\nopagebreak
  Département de mathématiques, faculté des sciences, université Ibn Tofail, 14000 Kénitra.\par\nopagebreak 
  \textit{E-mail address} : \texttt{nizar.elidrissi@uit.ac.ma}

  \medskip

  \textbf{Pr. Samir Kabbaj.} \par\nopagebreak Laboratoire : Equations aux dérivées partielles, Algèbre et Géométrie spectrales.
  \par\nopagebreak
  Département de mathématiques, faculté des sciences, université Ibn Tofail, 14000 Kénitra.\par\nopagebreak 
  \textit{E-mail address} : \texttt{samir.kabbaj@uit.ac.ma}
  
  \medskip
  
  \textbf{Pr. Brahim Moalige.}
  \par\nopagebreak Laboratoire : Equations aux dérivées partielles, Algèbre et Géométrie spectrales.
  \par\nopagebreak
  Département de mathématiques, faculté des sciences, université Ibn Tofail, 14000 Kénitra.\par\nopagebreak 
  \textit{E-mail address} : \texttt{brahim.moalige@uit.ac.ma}
}}

\sloppy
\theoremstyle{plain}

\newtheorem{proposition}{Proposition}[section]
\newtheorem{corollary}{Corollary}[section]
\newtheorem{lemma}{Lemma}[section]

\theoremstyle{definition}
\newtheorem{definition}{Definition}[section]
\newtheorem{example}{Example}[section]

\theoremstyle{remark}
\newtheorem{remark}{Remark}[section]

\makeatletter
\def\keywords{\xdef\@thefnmark{}\@footnotetext}
\makeatother

\makeatletter
\renewcommand*\env@matrix[1][*\c@MaxMatrixCols c]{%
  \hskip -\arraycolsep
  \let\@ifnextchar\new@ifnextchar
  \array{#1}}
\makeatother

\begin{document}
\newpage
\maketitle
\begin{abstract}
If $H$ is a Hilbert space, the non-compact Stiefel manifold $St(n,H)$ consists of independent $n$-tuples in $H$. In this article, we contribute to the topological study of non-compact Stiefel manifolds, mainly by proving two results on the path-connectedness and topological closure of some sets related to the non-compact Stiefel manifold. In the first part, after introducing and proving an essential lemma, we prove that $\bigcap_{j \in J} \left( U(j) + St(n,H) \right)$ is path-connected by polygonal paths under a condition on the codimension of the span of the components of the translating $J$-family. Then, in the second part, we show that the topological closure of $St(n,H) \cap S$ contains all polynomial paths contained in $S$ and passing through a point in $St(n,H)$. As a consequence, we prove that $St(n,H)$ is relatively dense in a certain class of subsets which we illustrate with many examples from frame theory coming from the study of the solutions of some linear and quadratic equations which are finite-dimensional continuous frames. Since $St(n,L^2(X,\mu;\mathbb{F}))$ is isometric to $\mathcal{F}_{(X,\Sigma,\mu),n}^\mathbb{F}$, this article is also a contribution to the theory of finite-dimensional continuous Hilbert space frames.
\end{abstract}



\keywords{2020 \emph{Mathematics Subject Classification.} 57N20; 42C15; 54D05; 54D99.}
\keywords{\emph{Key words and phrases.} Stiefel manifold, continuous frame, path-connected space, topological closure, dense subset.}

\tableofcontents

\section{Introduction}

Duffin and Shaeffer introduced in 1952 \cite{DuffinSchaeffer} the notion of a Hilbert space frame to study some deep problems in nonharmonic Fourier series. However, the general idea of signal decomposition in terms of elementary signals was known to Gabor \cite{Gabor} in 1946. The landmark paper of Daubechies, Grossmann, and Meyer \cite{DaubechiesGrossmannMeyer} (1986) accelerated the development of the theory of frames which then became more widely known to the mathematical community. Nowadays, frames have a wide range of applications in both engineering science and mathematics: they have found applications in signal processing, image processing, data compression, and sampling theory. They are also used in Banach space theory. Intuitively, a frame in a Hilbert space $K$ is an overcomplete basis allowing non-unique linear expansions, though technically, it must satisfy a double inequality called the frame inequality. There are many generalizations of frames in the literature, for instance frames in Banach space \cite{CasazzaHanLarson} or Hilbert C*-modules \cite{FrankLarson}. A general introduction to frame theory can be found in \cite{Casazza, Christensen}.

The space $\mathcal{F}_{(X,\Sigma,\mu),n}^\mathbb{F}$ of continuous frames indexed by $(X,\Sigma,\mu)$ and with values in $\mathbb{F}^n$ is isometric to the Stiefel manifold $St(n,L^2(X,\mu;\mathbb{F}))$. If $H$ is a Hilbert space, the non-compact Stiefel manifold $St(n,H)$ is the set of independent $n$-frames in $H$, where an independent $n$-frame simply denotes an independent $n$-tuple. Stiefel manifolds are studied in differential topology and are one of the fundamental examples in this area. Even though the theory of finite dimensional Stiefel manifolds is generally well-known \cite{James, Hatcher, Husemoller}, there are still some aspects under study \cite{Henkel, JurdjevicMarkinaLeite}. The theory of infinite dimensional Stiefel manifolds is less studied and some recent results can be found in \cite{BardelliMennucci, HarmsMennucci}.

There have been also many studies directly devoted to the geometry of frames and their subsets. Connectivity properties of some important subsets of the frame space $\mathcal{F}_{k,n}^\mathbb{F}$ were studied in \cite{CahillMixonStrawn, NeedhamShonkwiler}. Differential and algebro-geometric properties of these subsets were studied in \cite{DykemaStrawn, Strawn1, Strawn2, Strawn3} and chapter 4 of \cite{CasazzaKutyniok} respectively. A fiber bundle structure with respect to the $L^1$ and $L^\infty$ norms was established for continuous frames in \cite{Agrawal, AgrawalKnisley}. A notion of density for general frames analogous to Beurling density was introduced and studied in \cite{BalanCasazzaHeilLandau1}. Finally, connectivity and density properties were studied for Gabor \cite{BalanCasazzaHeilLandau2, ChristensenDengHeil, Heil, LabateWilson} and wavelet \cite{Bownik, GarrigosHernandezSikicSoriaWeissWilson, HanLarson, Speegle} frames. \\ \\
\textbf{Plan of the article.} This article is organized as follows. In section \ref{SectionPreliminaries}, we set some notations, introduce the definition of continuous Bessel and frame families and their basic properties in $\mathbb{F}^n$, and present Stiefel manifolds with an emphasis on their topological aspects. In section \ref{SectionPathConnectedness}, after introducing and proving an essential lemma, we prove that $\bigcap_{j \in J} \left( U(j) + St(n,H) \right)$ is path-connected by polygonal paths under a condition on the codimension of the span of the components of the translating $J$-family. Then, in section \ref{SectionRelativeDensity}, we show that the topological closure of $St(n,H) \cap S$ contains all polynomial paths contained in $S$ and passing through a point in $St(n,H)$. As a consequence, we prove that $St(n,H)$ is relatively dense in a certain class of subsets which we illustrate, in section \ref{SectionExamples}, with many examples from frame theory coming from the study of the solutions of some linear and quadratic equations which are finite-dimensional continuous frames (section \ref{SectionFrameSolutionLinQuadraticEq}).

\section{Preliminaries}
\label{SectionPreliminaries}

\subsection{Notations}
\label{SubsectionNotation}

The following notations are used throughout this article. \\
$\mathbb{N}$ denotes the set of natural numbers including 0 and $\mathbb{N}^* = \mathbb{N} \setminus \{0\}$. \\
We denote by $n$ an element of $\mathbb{N}^*$ and by $\mathbb{F}$ one of the fields $\mathbb{R}$ or $\mathbb{C}$. \\
If $K$ is a Hilbert space, we denote by $L(K)$ and $B(K)$ respectively the set of linear and bounded operators in $K$. $Id_K$ is the identity operator of $K$. \\
If $K$ is a Hilbert space, $m \in \mathbb{N}^*$, and $\theta_1, \cdots, \theta_m \in H$, the Gram matrix of $(\theta_1, \cdots, \theta_m)$ is the matrix $\Gram(\theta_1, \cdots, \theta_m)$ whose $k,l$-coefficient is $\Gram(\theta_1, \cdots, \theta_m)_{k,l} = \langle \theta_k, \theta_l \rangle$. \\
If $\sigma, \tau \in \mathbb{N}^*$, we denote by $M_{\sigma, \tau}(\mathbb{F})$ the algebra of matrices of size $\sigma \times \tau$ over the field $\mathbb{F}$. When $\sigma = \tau$, we denote this algebra $M_\sigma(\mathbb{F})$. \\
An element $x \in \mathbb{F}^n$ is a n-tuple $(x^1, \cdots, x^n)$ with $x^k \in \mathbb{F}$ for all $k \in [\![1,n]\!]$. \\
If $S \in L(\mathbb{F}^n)$, we denote by $[S] \in M_n(\mathbb{F})$ the matrix of $S$ in the standard basis of $\mathbb{F}^n$, and we write $I_n$ as a shorthand for $[Id_{\mathbb{F}^n}]$. \\
If $U = (u_x)_{x \in X}$ is a family in $\mathbb{F}^n$ indexed by $X$, then for each $k \in [\![1,n]\!]$, we denote by $U^k$ the family $(u_x^k)_{x \in X}$.

\subsection{Continuous frames in \texorpdfstring{$\mathbb{F}^n$}{Fn}}
\label{SubSectionFramesInFiniteDimension}

Let $K$ be a Hilbert space over $\mathbb{F}$ and $(X,\Sigma,\mu)$ a measure space.

\begin{definition} \cite{Christensen}
We say that a family $\Phi = (\varphi_x)_{x \in X}$ with $\varphi_x \in K$ for all $x \in X$ is a continuous frame in $K$ if \[ \exists \; 0 < A \leq B : \forall v \in K : A \| v \| ^2 \leq \int_X | \langle v , \varphi_x \rangle |^2 d\mu(x) \leq B \| v \|^2 \]
\end{definition}
A frame is tight if we can choose $A=B$ as frame bounds. A tight frame with bound $A=B=1$ is called a Parseval frame. A Bessel family is a family satisfying only the upper inequality. A frame is discrete if $\Sigma$ is the discrete $\sigma$-algebra and $\mu$ is the counting measure. If $(n,k) \in (\mathbb{N}^*)^2$, we denote by $\mathcal{F}_{(X,\Sigma,\mu),K}$, $\mathcal{F}_{(X,\Sigma,\mu),n}^\mathbb{F}$ and $\mathcal{F}_{k,n}^\mathbb{F}$ respectively the set of $(X,\Sigma,\mu)$-continuous frames with values in $K$, the set of $(X,\Sigma,\mu)$-continuous frames with values in $\mathbb{F}^n$, and the set of discrete frames indexed by $[\![1,k]\!]$ and with values in $\mathbb{F}^n$.

If  $U = (u_x)_{x \in X}$ with $u_x \in K$ for all $x \in K$ is a continuous Bessel family in $K$, we define its analysis operator $T_U : K \to L^2(X,\mu;\mathbb{F})$ by
\[ \forall v \in K : T_U(v) := (\langle v , u_x \rangle)_{x \in X}. \]
The adjoint of $T_U$ is an operator $T_U^* : L^2(X,\mu;\mathbb{F}) \to K$ given by
\[ \forall c \in L^2(X,\mu;\mathbb{F}) : T_U^*(c) = \int_X c(x) u_x d\mu(x). \]
The composition $S_U = T_U^* T_U : K \to K$ is given by
\[ \forall v \in K : S_U(v) = \int_X \langle v , u_x \rangle u_x d\mu(x) \]
and called the frame operator of $U$. Since $U$ is a Bessel family, $T_U$, $T_U^*$, and $S_U$ are all well defined and continuous. If $U$ is a frame in $K$, then $S_U$ is a positive self-adjoint operator satisfying $0 < A \leq S_U \leq B$ and thus, it is invertible. 

We now recall a proposition preventing that a frame belongs to $L^2(X,\mu;K)$ when $\dim(K)=\infty$. Here the set $L^2(X,\mu;K)$ refers to Bochner square integrable (classes) of functions in $\mathcal{M}(X;K)$, where the latter refers to the set of measurable functions from $X$ to $K$. It explains why we only study the $L^2$ topology of frame subspaces in the finite dimensional case.

\begin{proposition}
Let $K$ be a Hilbert space with $\dim{K} = \infty$. Then $\mathcal{F}_{(X,\Sigma,\mu),K} \cap L^2(X,\mu;K) = \emptyset$
\end{proposition}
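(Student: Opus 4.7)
The plan is to argue by contradiction: assume $\Phi = (\varphi_x)_{x \in X}$ belongs to $\mathcal{F}_{(X,\mu),K} \cap L^2(X,\mu;K)$ with frame bounds $0 < A \leq B$, and use infinite-dimensionality of $K$ to derive an incompatibility between the frame lower bound and the Bochner $L^2$-integrability.

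Since $\dim K = \infty$, one can extract a countably infinite orthonormal sequence $(e_i)_{i \in \mathbb{N}^*}$ in $K$. Applying the frame lower inequality to each $e_i$ gives $A = A\|e_i\|^2 \leq \int_X |\langle e_i, \varphi_x\rangle|^2 d\mu(x)$. Summing these inequalities over $i \in \{1,\dots,N\}$ and using the Tonelli theorem (the integrand is non-negative measurable) to interchange the finite sum and the integral, I obtain
\[
NA \leq \int_X \sum_{i=1}^N |\langle e_i, \varphi_x\rangle|^2 \, d\mu(x).
\]
By Bessel's inequality applied pointwise in $K$ to the orthonormal system $(e_i)_i$, the integrand is bounded above by $\|\varphi_x\|^2$ for almost every $x$. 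Hence
\[
NA \leq \int_X \|\varphi_x\|^2 \, d\mu(x) = \|\Phi\|_{L^2(X,\mu;K)}^2 < \infty,
\]
where the finiteness comes from the assumption $\Phi \in L^2(X,\mu;K)$. Letting $N \to \infty$ yields $A \leq 0$, contradicting $A > 0$. This shows $\mathcal{F}_{(X,\mu),K} \cap L^2(X,\mu;K) = \emptyset$.

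I do not anticipate a serious obstacle here: the only subtlety is justifying that one may legitimately extract an infinite orthonormal sequence (immediate from $\dim K = \infty$, using Gram--Schmidt on any countable linearly independent family) and that the Bessel-type bound $\sum_{i=1}^N |\langle e_i, \varphi_x\rangle|^2 \leq \|\varphi_x\|^2$ is valid pointwise in $x$. Both facts are entirely standard, so the argument reduces to the chain of inequalities above.
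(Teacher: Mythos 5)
Your proof is correct and is essentially the paper's argument in different clothing: the paper computes $\Tr(S_\Phi)=\int_X\|\varphi_x\|^2\,d\mu(x)$ over a full orthonormal basis and notes that $S_\Phi\geq A\cdot Id$ forces this trace to be infinite, while you obtain the same contradiction by summing the lower frame inequality over a finite orthonormal family and bounding the integrand with Bessel's inequality. Your version avoids the trace formalism and any interchange of an infinite sum with the integral, but the underlying idea is identical.
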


\begin{proof}
Let $\Phi = (\varphi_x)_{x \in X} \in \mathcal{F}_{(X,\Sigma,\mu),K} \cap L^2(X,\mu;K)$. Let $\{e_m\}_{m \in M}$ be an orthonormal basis of $K$. We have
\begin{align*} 
\Tr(S_\Phi) = \Tr(T_\Phi^*T_\Phi) &= \sum_{m \in M} \|T(e_m) \|^2 \\
&= \sum_{m \in M} \int_X | \langle e_m , \varphi_x \rangle |^2 d\mu(x) \\
&= \int_X \left( \sum_{m \in M} | \langle e_m , \varphi_x \rangle |^2 \right) d\mu(x) \\
&= \int_X \| \varphi_x \|^2 d\mu(x).
\end{align*}
Since $\Phi \in \mathcal{F}_{(X,\Sigma,\mu),K}$, there exists a constant $A>0$ such that 
\[ S_\Phi \geq A \cdot Id, \]
so 
\[ \int_X \| \varphi_x \|^2 d\mu(x) = Tr(S_\Phi) = +\infty \]
since $\dim(K)=\infty$. Hence $\Phi \notin L^2(X,\mu;K)$.
\end{proof}
From now on, we consider $K = \mathbb{F}^n$. In what follows, we will recall some elementary facts about Bessel sequences and frames in this setting.

\begin{proposition} 
A family $U = (u_x)_{x \in X}$ with $u_x \in \mathbb{F}^n$ for all $x \in X$ is a continuous Bessel family if and only if it belongs to $L^2(X,\mu,\mathbb{F}^n)$.
\label{PropBesselL2Ck}
\end{proposition}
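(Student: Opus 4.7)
The plan is to prove both implications directly, exploiting the fact that in finite dimension the unit ball has a finite orthonormal basis and that Cauchy--Schwarz gives a pointwise control of the inner product by the norm.

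For the implication ``$U \in L^2(X,\mu;\mathbb{F}^n) \Rightarrow U$ is Bessel'', I would fix $v \in \mathbb{F}^n$ and apply the Cauchy--Schwarz inequality pointwise to get $|\langle v, u_x \rangle|^2 \leq \|v\|^2 \, \|u_x\|^2$ for every $x \in X$. Integrating this with respect to $\mu$ yields
\[
\int_X |\langle v, u_x \rangle|^2 \, d\mu(x) \;\leq\; \|v\|^2 \int_X \|u_x\|^2 \, d\mu(x),
\]
so the constant $B := \int_X \|u_x\|^2 \, d\mu(x)$, which is finite precisely because $U \in L^2(X,\mu;\mathbb{F}^n)$, serves as an upper Bessel bound. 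A small side check is that the scalar function $x \mapsto \langle v, u_x \rangle$ is indeed measurable; this follows from the measurability of each coordinate $U^k$, since $\langle v, u_x \rangle$ is a $\mathbb{F}$-linear combination of the $\overline{u_x^k}$ (or $u_x^k$, depending on the convention used for sesquilinearity).

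For the converse, I would test the Bessel inequality against the standard basis $(e_1, \ldots, e_n)$ of $\mathbb{F}^n$. With the notation introduced in Subsection~\ref{SubsectionNotation}, $\langle e_k, u_x \rangle$ equals $u_x^k$ up to conjugation, so the Bessel bound $B$ applied to $v = e_k$ gives
\[
\int_X |u_x^k|^2 \, d\mu(x) \;\leq\; B \quad \text{for each } k \in [1,n].
\]
Summing over $k$ from $1$ to $n$ yields $\int_X \|u_x\|^2 \, d\mu(x) \leq nB < \infty$, which is exactly the condition $U \in L^2(X,\mu;\mathbb{F}^n)$.

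There is no real obstacle here: the proof only relies on finite-dimensionality through the finiteness of the trace of the identity, i.e.\ the finite sum over the orthonormal basis. The only point that requires any care is the measurability of $x \mapsto \langle v, u_x \rangle$ so that the integral in the Bessel inequality even makes sense, but this is immediate from componentwise measurability of $U$.
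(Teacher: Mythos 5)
Your proof is correct and follows essentially the same route as the paper's: the forward direction tests the Bessel inequality against the standard basis vectors and sums the resulting bounds, and the converse integrates the pointwise Cauchy--Schwarz estimate to produce the Bessel bound $B = \lVert U \rVert_{L^2(X,\mu;\mathbb{F}^n)}^2$. The only difference is your explicit remark on the measurability of $x \mapsto \langle v, u_x \rangle$, which the paper leaves implicit.
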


\begin{proof}
($\Rightarrow$) Suppose that $U = (u_x)_{x \in X}$ is a continuous Bessel family. For each $k \in [\![1,n]\!]$, denote by $e_k$ the $k$-th vector of the standard basis of $\mathbb{F}^n$. \\
Applying the definition to the vector $e_k$, we have for each $k \in \{1, \cdots, n\}$ : $\| U^k \|_{L^2(X,\mu,\mathbb{F})}^2 < \infty$, and so
\[ \| U \|_{L^2(X,\mu,\mathbb{F}^n)}^2 = \sum_{k=1}^n \| U^k \|_{L^2(X,\mu,\mathbb{F})}^2 < \infty, \]
which implies $U \in L^2(X,\mu;\mathbb{F}^n)$. \\
($\Leftarrow$) Suppose that $U = (u_x)_{x \in I} \in L^2(X,\mu;\mathbb{F}^n)$. We have
\[ \forall v \in \mathbb{F}^n : \int_{x \in X} |\langle v , u_x \rangle |^2 d\mu(x) \leq \| U \|_{L^2(X,\mu;\mathbb{F}^n)}^2 \lVert v \rVert^2 < \infty \]
by the Cauchy-Schwarz inequality, which implies that $U = (u_x)_{x \in X}$ is a continuous Bessel family.
\end{proof}

\begin{lemma}
If $U \in L^2(X,\mu;\mathbb{F}^n)$, then $[S_U] = \Gram(U^1, \cdots, U^n)$.
\end{lemma}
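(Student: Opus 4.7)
The plan is to identify the matrix $[S_U]$ entry by entry and recognize each entry as an $L^2$ inner product of component families, which by definition is exactly an entry of $\Gram(U^1,\ldots,U^n)$. By Proposition \ref{PropBesselL2Ck}, the assumption $U \in L^2(X,\mu;\mathbb{F}^n)$ guarantees that $U$ is Bessel, so the frame operator $S_U$ is a well-defined bounded operator on $\mathbb{F}^n$ and the integral defining it makes sense (as a Bochner integral, or coordinatewise).

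First I would use the standard formula for matrix entries in an orthonormal basis: for each $k,l \in [1,n]$,
\[ [S_U]_{k,l} = \langle S_U(e_l), e_k \rangle. \]
Then, substituting the integral definition $S_U(e_l) = \int_X \langle e_l, u_x \rangle\, u_x \, d\mu(x)$ and pulling the inner product with $e_k$ inside the integral (justified by continuity of $\langle \cdot, e_k\rangle$ and Bessel integrability), I obtain
\[ [S_U]_{k,l} = \int_X \langle e_l, u_x \rangle \langle u_x, e_k \rangle \, d\mu(x). \]
Using the standard basis decomposition $u_x = \sum_j u_x^j e_j$, this reads $\langle u_x, e_k\rangle = u_x^k$ and $\langle e_l, u_x\rangle = \overline{u_x^l}$, so
\[ [S_U]_{k,l} = \int_X u_x^k \, \overline{u_x^l} \, d\mu(x) = \langle U^k, U^l \rangle_{L^2(X,\mu;\mathbb{F})}. \]
By the definition of the Gram matrix recalled in Section \ref{SubsectionNotation}, the right-hand side is precisely $\Gram(U^1, \ldots, U^n)_{k,l}$, which finishes the proof.

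There is no real obstacle here: the argument is a direct unwinding of the definitions. The only point requiring minor care is consistency with the sesquilinearity convention (which argument of $\langle\cdot,\cdot\rangle$ is conjugate-linear), since swapping it would transpose/conjugate the identification; following the convention already fixed by the formula $T_U(v) = (\langle v, u_x\rangle)_{x\in X}$ in the preliminaries makes the computation go through as above.
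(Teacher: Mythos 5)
Your proof is correct and follows essentially the same route as the paper's: compute $[S_U]_{k,l}=\langle S_U e_l,e_k\rangle$ from the integral definition of the frame operator and identify it with $\langle U^k,U^l\rangle_{L^2(X,\mu;\mathbb{F})}=\Gram(U^1,\ldots,U^n)_{k,l}$. The only difference is that you make explicit the justifications (Bessel integrability via Proposition \ref{PropBesselL2Ck}, the sesquilinearity convention) that the paper leaves implicit.
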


\begin{proof}
Let $(e_k)_{k \in [\![1,n]\!]}$ the standard basis of $\mathbb{F}^n$. Let $i,j \in [\![1,n]\!]$. Then \[ [S_U]_{i,j} = \langle S e_j , e_i \rangle = \int_X \langle e_j , u_x \rangle \langle u_x , e_i \rangle d\mu(x) =  \int_X \overline{u_x^j} u_x^i d\mu(x) = \langle U^i , U^j \rangle. \]
\end{proof}

\begin{proposition} \cite{Christensen}
\label{PropCharFrames}
Suppose $\Phi = (\varphi_x)_{x \in X}$ is a family in $\mathbb{F}^n$. Then
\begin{align*}
\Phi \text{ is a continuous frame} &\Leftrightarrow \Phi \in L^2(X,\mu;\mathbb{F}^n) \text{ and } S_\Phi \text{ is invertible} \\
&\Leftrightarrow \Phi \in L^2(X,\mu;\mathbb{F}^n) \mbox{ and } \det(\Gram(\Phi^1,\cdots,\Phi^n)) > 0  \\
&\Leftrightarrow \Phi \in L^2(X,\mu;\mathbb{F}^n) \mbox{ and } \{\Phi^1, \cdots, \Phi^n\} \mbox{ is free}.
\end{align*}
\end{proposition}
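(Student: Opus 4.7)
The plan is to prove the three equivalences in a chain, using the previous lemma identifying $[S_U]$ with $\Gram(U^1,\ldots,U^n)$ and Proposition \ref{PropBesselL2Ck} characterizing Bessel families.

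For the first equivalence, $\Phi$ frame $\Leftrightarrow$ $\Phi \in L^2(X,\mu;\mathbb{F}^n)$ and $S_\Phi$ invertible, I would proceed as follows. The forward direction is immediate: a frame is Bessel, hence in $L^2$ by Proposition \ref{PropBesselL2Ck}, and the lower frame bound $A>0$ combined with $S_\Phi = T_\Phi^* T_\Phi$ being positive self-adjoint yields $S_\Phi \geq A\cdot Id$, which forces invertibility. For the converse, $\Phi \in L^2$ gives via Proposition \ref{PropBesselL2Ck} a Bessel bound $B$, and the key observation is that $S_\Phi$ is positive self-adjoint on the finite-dimensional space $\mathbb{F}^n$; being invertible, its smallest eigenvalue $A$ is strictly positive, so by the spectral theorem $\langle S_\Phi v , v \rangle \geq A\|v\|^2$ for all $v \in \mathbb{F}^n$. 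Unpacking the left-hand side as $\int_X |\langle v,\varphi_x\rangle|^2 d\mu(x)$ yields the lower frame bound.

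For the second equivalence, I would simply combine the preceding lemma $[S_\Phi] = \Gram(\Phi^1,\ldots,\Phi^n)$ with the fact that $S_\Phi$ is positive semi-definite; thus all its eigenvalues are nonnegative and $\det([S_\Phi])\geq 0$. Invertibility of $S_\Phi$ is equivalent to $\det([S_\Phi])\neq 0$, hence to $\det(\Gram(\Phi^1,\ldots,\Phi^n)) > 0$.

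For the third equivalence, I would invoke the classical fact that the Gram determinant of a finite family in a Hilbert space (here $L^2(X,\mu;\mathbb{F})$) vanishes if and only if the family is linearly dependent. If $\{\Phi^1,\ldots,\Phi^n\}$ is dependent, a nontrivial relation $\sum_k \lambda_k \Phi^k = 0$ yields a nontrivial kernel vector $(\lambda_1,\ldots,\lambda_n)$ for the Gram matrix, so its determinant is $0$. Conversely, if the Gram matrix has nontrivial kernel with some $\lambda = (\lambda_1,\ldots,\lambda_n)$, then $\|\sum_k \lambda_k \Phi^k\|_{L^2}^2 = \lambda^* \Gram(\Phi^1,\ldots,\Phi^n)\lambda = 0$, producing a linear dependence. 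I expect no serious obstacle here; the only mild subtlety is making sure to treat the finite-dimensional spectral argument in the first step cleanly, since that is where one translates abstract invertibility of $S_\Phi$ into the quantitative lower frame bound.
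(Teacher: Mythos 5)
Your proof is correct. Note that the paper itself states this proposition without proof, simply citing Christensen's book, so there is no in-paper argument to compare against; your chain of equivalences --- using Proposition \ref{PropBesselL2Ck} for the Bessel/$L^2$ part, the finite-dimensional spectral argument to pass between invertibility of $S_\Phi$ and the lower frame bound, the lemma $[S_\Phi]=\Gram(\Phi^1,\dots,\Phi^n)$ together with positive semi-definiteness for the determinant step, and the classical Gram-determinant criterion for linear independence --- is exactly the standard route and fits the preliminaries the paper has already set up.
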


\begin{proposition} \cite{Christensen}
\label{PropCharTightFrames}
Suppose $\Phi = \{\varphi_x\}_{x \in X}$ is a family in $\mathbb{F}^n$ and let $a > 0$. Then
\begin{align*}
\Phi \text{ is a measurable a-tight frame} &\Leftrightarrow \Phi \in L^2(X,\mu;\mathbb{F}^n) \mbox{ and } S_\Phi = aI_n \\
&\Leftrightarrow \Phi \in L^2(X,\mu;\mathbb{F}^n) \mbox{ and } \Gram(\Phi^1,\cdots,\Phi^n) = aI_n \\
&\Leftrightarrow \Phi \in L^2(X,\mu;\mathbb{F}^n) \mbox{ and } (\Phi^1,\cdots,\Phi^n) \\
& \text{is an orthogonal family of } L^2(X,\mu;\mathbb{F}) \\
& \text{and } (\forall i \in [\![1,n]\!] : \lVert \Phi^i \rVert = \sqrt{a}).
\end{align*}
\end{proposition}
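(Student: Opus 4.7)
The plan is to prove this proposition as a chain of three equivalences, leveraging the previous two results in the preliminaries (Proposition \ref{PropBesselL2Ck} and the Gram-matrix lemma) so that most of the work reduces to straightforward characterizations.

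For the first equivalence, I would start from the definition: $\Phi$ is a measurable $a$-tight frame means $\Phi$ is measurable and satisfies $a\|v\|^2 \leq \int_X |\langle v, \varphi_x\rangle|^2\, d\mu(x) \leq a\|v\|^2$ for all $v \in \mathbb{F}^n$, i.e. equality throughout. Because $\Phi$ is in particular Bessel, Proposition \ref{PropBesselL2Ck} gives $\Phi \in L^2(X,\mu;\mathbb{F}^n)$, and conversely any $L^2$ family is Bessel so the integral defines $\langle S_\Phi v, v\rangle$. The tight-frame equality thus becomes $\langle S_\Phi v, v\rangle = a\|v\|^2 = \langle aI_n v, v\rangle$ for every $v$. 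Since $S_\Phi$ is self-adjoint (and we work over $\mathbb{F} \in \{\mathbb{R},\mathbb{C}\}$), the standard polarization argument yields $S_\Phi = aI_n$, and the converse is immediate by plugging any $v$ into $\langle S_\Phi v, v\rangle$.

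For the second equivalence, I would simply invoke the preceding lemma stating $[S_U] = \Gram(U^1, \cdots, U^n)$ for any $U \in L^2(X,\mu;\mathbb{F}^n)$. Then $S_\Phi = aI_n$ is equivalent to $[S_\Phi] = aI_n$, which is equivalent to $\Gram(\Phi^1, \cdots, \Phi^n) = aI_n$, under the standing assumption $\Phi \in L^2(X,\mu;\mathbb{F}^n)$.

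For the third equivalence, I would unpack the Gram matrix condition coefficient-wise: $\Gram(\Phi^1, \cdots, \Phi^n) = aI_n$ means $\langle \Phi^i, \Phi^j\rangle_{L^2} = a\,\delta_{ij}$ for all $i,j \in [1,n]$, which is exactly the statement that $(\Phi^1, \cdots, \Phi^n)$ is an orthogonal family in $L^2(X,\mu;\mathbb{F})$ with $\|\Phi^i\| = \sqrt{a}$ for each $i$. No step here presents a serious obstacle; the only mild subtlety is to make sure the measurability hypothesis in the definition of an $a$-tight frame is matched with the $L^2$ membership, which is exactly handled by Proposition \ref{PropBesselL2Ck}.
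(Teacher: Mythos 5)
The paper does not prove this proposition at all: it is stated as a recalled result with a citation to Christensen's book, so there is no in-paper argument to compare against. Your proof is correct and is the standard one: the reduction of the tight-frame equality to $\langle S_\Phi v, v\rangle = a\lVert v\rVert^2$ together with self-adjointness of $S_\Phi$ (needed for the polarization step when $\mathbb{F}=\mathbb{R}$), the identification $[S_\Phi]=\Gram(\Phi^1,\dots,\Phi^n)$ from the preceding lemma, and the coefficient-wise reading of $\Gram(\Phi^1,\dots,\Phi^n)=aI_n$ all go through exactly as you describe, with Proposition \ref{PropBesselL2Ck} supplying the equivalence between Bessel and $L^2$ membership.
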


\begin{example}
Define $\varphi_m^1 = \frac{1}{m} e^{2 \pi i a m }$ and $\varphi_m^2 = \frac{1}{m} e^{2 \pi i b m }$ with $a,b$ two real numbers such that $a-b$ is not an integer. Then $\Phi^1 = (\varphi_m^1)_{m \in \mathbb{N}}$ and $\Phi^2 = (\varphi_m^2)_{m \in \mathbb{N}}$ are square summable with sum $\frac{\pi^2}{6}$. Since the sequences $\Phi^1$ and $\Phi^2$ are not proportional due to the constraint on $a$ and $b$,  it follows by \ref{PropCharFrames} that $\Phi$ is a discrete frame in $\mathbb{C}^2$. It is not however a tight frame since $\Phi^1$ and $\Phi^2$ are not orthogonal.
\end{example}

\subsection{Basic topological properties of \texorpdfstring{$St(n,H)$}{St(n,H)} and \texorpdfstring{$St_o(n,H)$}{Sto(n,H)}}
\label{SubSectionBasicTopPropertiesSt(n,H)}

In this subsection, we introduce $St(n,H)$ and $St_o(n,H)$ as well as some of their basic topological properties. We recall that $n$ is a fixed element of $\mathbb{N}^*$. If $H$ is a Hilbert space, then $St(n,H)$ is non-empty exactly when $\dim(H) \geq n$. In the following, we will always suppose this condition.

\begin{definition}
The \textit{non-compact Stiefel manifold} of independent $n$-frames in $H$ is defined by ${St(n,H) := \{h = (h_1,\cdots,h_n) \in H^n : \{h_1,\cdots,h_n\} \text{ is free}\}}$. The \textit{Stiefel manifold of orthonormal $n$-frames in $H$} is defined by ${St_o(n,H) := \{h = (h_1,\cdots,h_n) \in H^n : \{h_1,\cdots,h_n\} \text{ is an orthonormal system}\}}$.
\end{definition}

\begin{proposition}
\label{PropStiefelIsometricFrames}  
We have
\begin{enumerate}
\item $St(n,L^2(X,\mu;\mathbb{F}))$ is isometric to $\mathcal{F}_{(X,\Sigma,\mu),n}^\mathbb{F}$.
\item $St_o(n,L^2(X,\mu;\mathbb{F}))$ is isometric to the set of continuous $(X,\Sigma,\mu)$-Parseval frames with values in $\mathbb{F}^n$.
\end{enumerate}
\end{proposition}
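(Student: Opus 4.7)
The plan is to construct an explicit isometry $\Psi : \mathcal{F}_{(X,\mu),n}^\mathbb{F} \to St(n,L^2(X,\mu;\mathbb{F}))$ by the assignment $\Phi = (\varphi_x)_{x \in X} \mapsto (\Phi^1, \ldots, \Phi^n)$, where $\Phi^k = (\varphi_x^k)_{x \in X}$ denotes the $k$-th coordinate sequence as already defined in the notation subsection. The natural candidate for the inverse sends an $n$-tuple $(h_1, \ldots, h_n) \in St(n,L^2(X,\mu;\mathbb{F}))$ to the family $(\varphi_x)_{x \in X}$ with $\varphi_x := (h_1(x), \ldots, h_n(x)) \in \mathbb{F}^n$. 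Once these maps are shown to be well-defined, mutually inverse, and to preserve norms, the first claim follows, and the second will be obtained by restricting $\Psi$.

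The first step is to check that $\Psi$ lands in $St(n,L^2(X,\mu;\mathbb{F}))$. By Proposition \ref{PropCharFrames}, $\Phi \in \mathcal{F}_{(X,\mu),n}^\mathbb{F}$ if and only if $\Phi \in L^2(X,\mu;\mathbb{F}^n)$ and the family $\{\Phi^1, \ldots, \Phi^n\}$ is free in $L^2(X,\mu;\mathbb{F})$. The condition $\Phi \in L^2(X,\mu;\mathbb{F}^n)$ precisely says that each coordinate $\Phi^k$ is an element of $L^2(X,\mu;\mathbb{F})$, since by Tonelli
\[ \|\Phi\|_{L^2(X,\mu;\mathbb{F}^n)}^2 = \int_X \sum_{k=1}^n |\varphi_x^k|^2 \, d\mu(x) = \sum_{k=1}^n \|\Phi^k\|_{L^2(X,\mu;\mathbb{F})}^2. \]
Combined with the freeness condition, this shows $(\Phi^1, \ldots, \Phi^n) \in St(n,L^2(X,\mu;\mathbb{F}))$. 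The displayed identity also gives directly that $\Psi$ is an isometry when $L^2(X,\mu;\mathbb{F}^n)$ is equipped with its native norm and $L^2(X,\mu;\mathbb{F})^n$ is given the Pythagorean $\ell^2$-product norm, which is the standard way $St(n,L^2(X,\mu;\mathbb{F}))$ inherits its metric as a subset of $L^2(X,\mu;\mathbb{F})^n$.

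For the second step, starting from $(h_1, \ldots, h_n) \in St(n,L^2(X,\mu;\mathbb{F}))$, I would define $\varphi_x = (h_1(x),\ldots,h_n(x))$ pointwise on a full-measure subset where representatives of the $h_k$ exist; measurability of $x \mapsto \varphi_x$ is immediate component-wise, and the same Tonelli identity shows $\Phi \in L^2(X,\mu;\mathbb{F}^n)$. By construction, $\Phi^k = h_k$, so $\{\Phi^1,\ldots,\Phi^n\}$ is free, and Proposition \ref{PropCharFrames} again yields $\Phi \in \mathcal{F}_{(X,\mu),n}^\mathbb{F}$. The two constructions are manifestly inverse to each other at the level of (classes of) families, which completes the bijective isometry for item (1).

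Item (2) follows by observing that $\Psi$ sends the Parseval subset of $\mathcal{F}_{(X,\mu),n}^\mathbb{F}$ exactly onto $St_o(n,L^2(X,\mu;\mathbb{F}))$. Indeed, by Proposition \ref{PropCharTightFrames} with $a=1$, a family $\Phi \in L^2(X,\mu;\mathbb{F}^n)$ is a Parseval frame iff $\Gram(\Phi^1,\ldots,\Phi^n) = I_n$, which by the Gram matrix definition in the notation subsection is exactly the condition that $(\Phi^1,\ldots,\Phi^n)$ be an orthonormal $n$-tuple in $L^2(X,\mu;\mathbb{F})$. The main conceptual point—really the only non-bookkeeping step—is the identification $L^2(X,\mu;\mathbb{F}^n) \simeq L^2(X,\mu;\mathbb{F})^n$ via Tonelli, which is where the isometry actually lives; the rest is a direct translation of the characterizations of frames and Parseval frames already established.
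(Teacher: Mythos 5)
Your proposal is correct and is essentially the paper's own proof: the paper defines the same coordinate-wise ``transpose'' map $F = (f_x)_{x\in X} \mapsto ((f_x^1)_{x\in X},\dots,(f_x^n)_{x\in X})$, notes it is an isometry, and invokes Propositions \ref{PropCharFrames} and \ref{PropCharTightFrames} to identify frames with independent tuples and Parseval frames with orthonormal tuples. You simply spell out the inverse map and the Tonelli identity that the paper leaves implicit.
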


\begin{proof}
Define 
\[ \textbf{\text{Transpose}} : \begin{cases} L^2(X,\mu;\mathbb{F}^n) &\to L^2(X,\mu;\mathbb{F})^n \\ F = (f_x)_{x \in X} &\mapsto ((f_x^1)_{x \in X},\cdots,(f_x^n)_{x \in X}) \end{cases}. \]
Then \textbf{Transpose} is clearly an isometry, and it sends $\mathcal{F}_{(X,\Sigma,\mu),n}^\mathbb{F}$ to $St(n,L^2(X,\mu;\mathbb{F}))$ and $St_o(n,L^2(X,\mu;\mathbb{F}))$ to the set of continuous $(X,\Sigma,\mu)$-Parseval frames with values in $\mathbb{F}^n$ by propositions \ref{PropCharFrames} and \ref{PropCharTightFrames} respectively. 
\end{proof}

\begin{remark}
\label{RemarkIsometry}
Because of proposition \ref{PropStiefelIsometricFrames}, the reader should keep in mind that the following topological properties and the new results of this article are also shared, for any measure space $(X,\Sigma,\mu)$, by $\mathcal{F}_{(X,\Sigma,\mu),n}^\mathbb{F}$ or the set of continuous $(X,\Sigma,\mu)$-Parseval frames with values in $\mathbb{F}^n$, depending on the context.
\end{remark}

\begin{proposition}
\label{PropFramesOpenClosed}
We have
\begin{enumerate}
\item $St(n,H)$ is open in $H^n$.
\item $St_o(n,H)$ is closed in $H^n$.
\end{enumerate}
\end{proposition}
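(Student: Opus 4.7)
The plan is to reduce both statements to continuity of the Gram matrix map. Define $G : H^n \to M_n(\mathbb{F})$ by $G(h_1, \ldots, h_n) = \Gram(h_1, \ldots, h_n)$, so that the $(k,l)$-entry of $G(h)$ is the inner product $\langle h_k, h_l \rangle$. Since each such inner product is a continuous (sesquilinear) map $H \times H \to \mathbb{F}$, the map $G$ is continuous from $H^n$ (with its product Hilbert topology) into $M_n(\mathbb{F})$ (with any of its equivalent finite-dimensional norms).

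For the first item, I would use the standard fact that $(h_1,\ldots,h_n)$ is linearly independent if and only if its Gram matrix is invertible, i.e.\ $\det \Gram(h_1,\ldots,h_n) \neq 0$. (One direction is immediate; the other uses that a linear dependence gives a nonzero vector in the kernel of the Gram operator.) Consequently,
\[ St(n,H) = (\det \circ G)^{-1}\bigl( \mathbb{F} \setminus \{0\} \bigr), \]
which is the preimage of an open set under a continuous map, hence open in $H^n$.

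For the second item, the defining condition $\langle h_k, h_l \rangle = \delta_{kl}$ for all $k,l \in [1,n]$ is exactly $G(h) = I_n$. Thus
\[ St_o(n,H) = G^{-1}(\{I_n\}), \]
which is the preimage of a singleton (closed in the Hausdorff space $M_n(\mathbb{F})$) under a continuous map, hence closed in $H^n$.

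There is no real obstacle here; the only item worth stating carefully is the equivalence \emph{linear independence $\Leftrightarrow$ invertibility of the Gram matrix}, which the authors have already been using implicitly (cf.\ Proposition \ref{PropCharFrames}), so it may be cited rather than reproved. Everything else is formal: continuity of inner products and of the determinant, plus the elementary openness/closedness of $\mathbb{F} \setminus \{0\}$ and $\{I_n\}$ respectively.
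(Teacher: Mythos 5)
Your proof is correct and follows essentially the same route as the paper, which likewise writes $St(n,H) = (\det \circ \Gram)^{-1}((0,\infty))$ and $St_o(n,H) = \Gram^{-1}(I_n)$ and appeals to continuity of the Gram map; your version merely spells out the continuity and the independence--invertibility equivalence, and uses $\mathbb{F}\setminus\{0\}$ in place of $(0,\infty)$ (both work, since the Gram determinant is real and nonnegative).
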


\begin{proof} 
\begin{enumerate}
\item $St(n,H)$ is open because $St(n,H) = (\det \circ \Gram)^{-1}((0,\infty))$.
\item $St_o(n,H)$ is closed because $St_o(n,H) = \Gram^{-1}(I_n)$.
\end{enumerate}
\end{proof}

By joining continuously each element of $St(n,H)$ to its corresponding Gram-Schmidt orthonormalized system in $St_o(n,H)$, we can prove

\begin{proposition} $St_o(n,H)$ is a deformation retract of $St(n,H)$.
\end{proposition}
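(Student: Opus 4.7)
The plan is to construct an explicit strong deformation retract of $St(n,H)$ onto $St_o(n,H)$ by continuously reversing the Gram-Schmidt process. First, I would set up the Gram-Schmidt map $GS : St(n,H) \to St_o(n,H)$ inductively: $\epsilon_1(h) := h_1 / \|h_1\|$, and for $i \geq 2$, $\epsilon_i(h) := v_i(h) / \|v_i(h)\|$ where $v_i(h) := h_i - \sum_{j<i} \langle h_i, \epsilon_j(h) \rangle \epsilon_j(h)$. By induction on $i$, continuity of the inner product and norm together with the fact that $\|v_i(h)\| > 0$ on $St(n,H)$ (because the tuple is free) show that each $\epsilon_i$ is continuous, so $GS$ is a continuous retraction.

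Next, I would record the associated QR-type identity $h_i = \sum_{j=1}^{i} R_{ji}(h)\, \epsilon_j(h)$, where $R_{ji}(h) = \langle h_i, \epsilon_j(h) \rangle$ for $j < i$ and $R_{ii}(h) = \|v_i(h)\| > 0$, and linearly interpolate the upper-triangular coefficient matrix $R(h)$ to the identity $I_n$. Explicitly, define
\[
F : St(n,H) \times [0,1] \to H^n, \qquad F(h,t)_i := \sum_{j<i} (1-t)\, R_{ji}(h)\, \epsilon_j(h) + \bigl((1-t)\, R_{ii}(h) + t\bigr)\, \epsilon_i(h).
\]
A direct check shows $F(h,0) = h$ and $F(h,1) = GS(h)$. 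For each fixed $t \in [0,1]$, the tuple $F(h,t)$ is the image of the orthonormal tuple $(\epsilon_1(h), \ldots, \epsilon_n(h))$ under an upper-triangular change of basis whose diagonal entries $(1-t)\, R_{ii}(h) + t$ are strictly positive; hence the tuple stays free and $F(h,t) \in St(n,H)$. Continuity of $F$ follows from continuity of the $\epsilon_i$ and $R_{ji}$, and if $h \in St_o(n,H)$ then $GS(h) = h$ and $R(h) = I_n$, so $F(h,t) = h$ for every $t$, making the retraction strong.

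The only point requiring attention is ensuring that each intermediate tuple $F(h,t)$ remains in the open set $St(n,H)$, and this is handled uniformly by the triangular-with-positive-diagonal structure of the interpolating matrix rather than by any case analysis. Everything else reduces to standard continuity arguments for Gram-Schmidt, and no finite-dimensionality assumption on $H$ is used since all formulas are expressed purely in terms of inner products, norms, and finitely many linear combinations inside the $n$-dimensional span of $(h_1, \ldots, h_n)$.
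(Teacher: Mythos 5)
Your proof is correct and is exactly the argument the paper sketches: the paper only says the result follows ``by joining continuously each element of $St(n,H)$ to its corresponding Gram--Schmidt orthonormalized system,'' and your homotopy $F(h,t)$ is precisely that straight-line join $(1-t)h + t\,GS(h)$ rewritten in the orthonormal basis $(\epsilon_1(h),\ldots,\epsilon_n(h))$, with the upper-triangular-positive-diagonal observation supplying the detail the paper omits about why the intermediate tuples stay free. You have simply filled in, correctly, the continuity and invertibility checks that the paper leaves to the reader.
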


Concerning the connectedness properties of Stiefel manifolds, we have the following

\begin{definition}
Let $X$ be a topological space and $m \in \mathbb{N}$. Then $X$ is said to be $m$-connected if its homotopy groups $\pi_i(X,b)$ are trivial for all $i \in [\![0,m]\!]$ and $b \in X$ (of course, $\pi_0(X,b)$ is not a group but merely a pointed set).
\end{definition}

\begin{proposition} (see pp. 382-383 of \cite{Hatcher}) We have
\begin{enumerate}
\item $St(n,\mathbb{R}^k)$ is $(k-n-1)$-connected.
\item $St(n,\mathbb{C}^k)$ is $(2k-2n)$-connected.
\item If $H$ is infinite dimensional, then $St(n,H)$ is contractible.
\end{enumerate}
\end{proposition}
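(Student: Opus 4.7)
The plan is to reduce all three claims to $St_o(n,H)$, which has the homotopy type of $St(n,H)$ by the preceding deformation retract. For parts (1) and (2), I would induct on $n$ using a natural sphere-bundle structure: setting $d = \dim_{\mathbb{R}} \mathbb{F} \in \{1,2\}$, the projection $p : St_o(n,\mathbb{F}^k) \to S^{dk-1}$, $(f_1,\dots,f_n) \mapsto f_n$, has fiber $St_o(n-1, v^{\perp}) \cong St_o(n-1,\mathbb{F}^{k-1})$ over $v$, and completing $v$ to a smoothly varying orthonormal basis on a neighborhood exhibits $p$ as a locally trivial fibration. The base case $n=1$ is immediate since $St_o(1,\mathbb{F}^k) = S^{dk-1}$ is $(dk-2)$-connected, matching both formulas. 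For the inductive step I would plug this fibration into the long exact sequence in homotopy: the inductive hypothesis gives the fiber as $(k-n-1)$-connected (real) or $(2k-2n)$-connected (complex), the base sphere is at least as connected (since $n \geq 1$), and exactness forces $\pi_i(St_o(n,\mathbb{F}^k))$ to vanish throughout the claimed range.

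For part (3), assume $H$ is infinite-dimensional. I would first establish weak contractibility: given a continuous $f : S^i \to St_o(n,H)$, the image is compact in $H^n$, so for any $\varepsilon > 0$ there is a finite-dimensional subspace $W \subset H$ of arbitrarily large dimension $k$ such that the componentwise orthogonal projection onto $W$ is uniformly within $\varepsilon$ of $f$. For $\varepsilon$ small enough the projected family remains independent by compactness and the openness from Proposition \ref{PropFramesOpenClosed}, and a continuous Gram--Schmidt then produces a homotopy from $f$ to a map $\tilde f : S^i \to St_o(n,W)$. Choosing $k \geq n+i+1$ in the real case and $k \geq n+\lceil i/2 \rceil$ in the complex case makes $St_o(n,W)$ $i$-connected by the previous parts, so $\tilde f$, hence $f$, is null-homotopic. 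Since $St_o(n,H)$ is a smooth Hilbert submanifold of $H^n$ it is an ANR and has the homotopy type of a CW complex, so the Whitehead theorem upgrades weak contractibility to contractibility.

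The main obstacle I anticipate is the approximation step in part (3): a single $W$ and a single Gram--Schmidt homotopy must work simultaneously for every $x \in S^i$ and stay inside $St_o(n,H)$ at each intermediate time, which rests on uniform estimates from compactness of the image and continuity of the determinant of the Gram matrix. A cleaner alternative, bypassing these estimates, is to invoke Kuiper's theorem (the unitary group $U(H)$ of an infinite-dimensional Hilbert space is contractible) together with the principal bundle $U(H) \to St_o(n,H)$ whose fiber $U(H \ominus V)$ is again contractible; the long exact sequence then yields weak contractibility directly, and the same ANR/CW argument finishes.
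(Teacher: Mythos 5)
Your argument is correct, and it is essentially the proof the paper points to: the paper gives no proof of its own, only the citation to Hatcher, and the cited pages establish (1) and (2) by exactly your induction on $n$ via the orthonormal-frame bundle $St_o(n-1,\mathbb{F}^{k-1}) \to St_o(n,\mathbb{F}^k) \to S^{dk-1}$ and the long exact sequence, after reducing to $St_o$ by the deformation retraction. For (3) the paper likewise defers to an external reference, and your Kuiper-theorem route through the principal bundle $U(H)\to St_o(n,H)$ (with the ANR/Whitehead upgrade from weak contractibility) is the standard argument given there, so nothing further is needed.
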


Moreover, we have

\begin{proposition} 
If $H$ is infinite dimensional, then $St(n,H)$ is contractible.
\end{proposition}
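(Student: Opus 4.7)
The plan is to exhibit an explicit contraction of $St(n,H)$ by exploiting the infinite-dimensional room available in $H$. I would first use the preceding proposition, which provides a deformation retract of $St(n,H)$ onto $St_o(n,H)$, to reduce the question to the contractibility of $St_o(n,H)$; this reduction is convenient because orthonormality is automatically preserved by any isometry of $H$, which sidesteps delicate independence verifications.

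I would then argue by induction on $n$. For the base case $n=1$, the problem is to contract the unit sphere $S_H=\{h\in H:\|h\|=1\}$. Since $H$ is infinite-dimensional, it is isometrically isomorphic to $H\oplus K$ for some nonzero Hilbert space $K$, so we can fix an isometric embedding $T\colon H\to H$ with proper closed image together with a unit vector $e\in T(H)^\perp$. The contraction then proceeds in two phases: for $t\in[0,1/2]$, move $h$ to $Th$ along the normalized chord
\[
F_t(h)\;=\;\frac{(1-2t)h+2tTh}{\|(1-2t)h+2tTh\|};
\]
for $t\in[1/2,1]$, move $Th$ to $e$ along the great-circle arc
\[
F_t(h)\;=\;\cos\!\bigl((2t-1)\pi/2\bigr)\,Th\;+\;\sin\!\bigl((2t-1)\pi/2\bigr)\,e,
\]
which has norm one because $Th\perp e$.

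For the inductive step, the natural projection $p\colon St_o(n,H)\to St_o(n-1,H)$ forgetting the last vector is a locally trivial fiber bundle whose fiber over $(v_1,\ldots,v_{n-1})$ is the unit sphere of the closed subspace $\mathrm{span}(v_1,\ldots,v_{n-1})^\perp$, which is still infinite-dimensional and hence contractible by the base case. Combined with the inductive hypothesis, the long exact sequence of homotopy groups attached to $p$ gives $\pi_k(St_o(n,H))=0$ for every $k\geq 0$; since $St_o(n,H)$ is a smooth Banach manifold and therefore has the homotopy type of a CW complex, this weak contractibility upgrades to (strong) contractibility.

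The main obstacle is the well-definedness of the chord in the base case. If $(1-2t)h+2tTh=0$ for some $t\in(0,1/2)$ and some unit vector $h$, then $h=\mu Th$ with $\mu=-2t/(1-2t)$; applying $T$ repeatedly gives $h=\mu^k T^k h$, and because $T$ is an isometry we get $1=\|h\|=|\mu|^k\|T^k h\|=|\mu|^k$ for every $k$, forcing $|\mu|=1$, i.e.\ $t=1/4$ and $Th=-h$. It then remains to pick $T$ so that $-1$ is not an eigenvalue: e.g.\ take the shift $T(e_k)=e_{k+1}$ on a fixed orthonormal basis $(e_k)_{k\in\mathbb{N}^*}$ of a separable closed subspace (extended by the identity on its orthogonal complement when $H$ is non-separable), in which case a coordinate-wise inspection of $Th=-h$ forces $h=0$.
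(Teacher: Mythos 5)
Your proof is correct, but note that the paper itself offers no proof of this proposition at all --- it merely points to an external math.stackexchange thread --- so any self-contained argument is already a genuine addition rather than a variant of "the paper's proof." Your base case is the classical shift-operator contraction of the unit sphere and is handled carefully: you correctly isolate that a vanishing chord forces $Th=-h$, and your choice of $T$ (a shift on a separable summand, extended by the identity) has no eigenvalue $-1$, while the great-circle phase works because $Th\perp e$. The inductive step is also sound but rests on two nontrivial facts you assert rather than prove: that $p\colon St_o(n,H)\to St_o(n-1,H)$ is a locally trivial bundle (hence, over a metrizable and therefore paracompact base, a fibration with a long exact sequence), and that metrizable Hilbert manifolds have CW homotopy type so that weak contractibility upgrades via Whitehead's theorem. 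Both are citable, but they make the argument heavier than necessary: the same shift trick contracts $St(n,H)$ directly, with no induction, no reduction to $St_o(n,H)$, no fibration, and no Whitehead. Indeed, choose an isometry $T$ with no eigenvalues and with orthonormal vectors $f_1,\dots,f_n$ in $T(H)^\perp$; the homotopy $(v_1,\dots,v_n)\mapsto\bigl((1-t)v_i+tTv_i\bigr)_{i}$ stays in $St(n,H)$ because a dependency $\sum_i c_i\bigl((1-t)v_i+tTv_i\bigr)=0$ reads $(1-t)w+tTw=0$ with $w=\sum_i c_iv_i$, forcing $w=0$ and hence all $c_i=0$; and the straight-line homotopy from $(Tv_1,\dots,Tv_n)$ to $(f_1,\dots,f_n)$ stays in $St(n,H)$ because the two pieces of any dependency lie in the orthogonal subspaces $T(H)$ and $\Span(\{f_1,\dots,f_n\})$ and must vanish separately. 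That one-page argument is essentially what the cited reference does; yours buys a proof that stays entirely inside the orthonormal Stiefel manifold at the cost of importing bundle-theoretic machinery.
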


A proof can be found in \href{https://math.stackexchange.com/questions/199133/contractibility-of-the-sphere-and-stiefel-manifolds-of-a-separable-hilbert-space/200481}{this math.stackexchange thread}.

The following proposition asserts the density of $St(n,H)$ in $H^n$. A corollary of one of our results in this article (corollary \ref{CorSt(n,H)CapSDenseInS}) gives a generalization of this proposition.

\begin{proposition}
\label{PropSt(n,H)Dense}
$St(n,H)$ is dense in $H^n$.
\end{proposition}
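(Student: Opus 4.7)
The plan is to prove density by an inductive perturbation argument, constructing the components of the approximating tuple one at a time.

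Fix $(g_1, \ldots, g_n) \in H^n$ and $\varepsilon > 0$. I will build $(h_1, \ldots, h_n) \in St(n,H)$ satisfying $\|h_k - g_k\| < \varepsilon$ for all $k \in [1,n]$, by induction on $k$, maintaining the invariant that $\{h_1, \ldots, h_k\}$ is linearly independent. For the base case $k = 1$, if $g_1 \neq 0$ I take $h_1 = g_1$; if $g_1 = 0$ I pick any nonzero $h_1 \in H$ with $\|h_1\| < \varepsilon$, which exists since $\dim(H) \geq n \geq 1$.

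For the inductive step, suppose $h_1, \ldots, h_{k-1}$ have been chosen. Let $V_{k-1} = \Span(h_1, \ldots, h_{k-1})$, a subspace of dimension $k-1$, hence closed in $H$. If $g_k \notin V_{k-1}$, set $h_k = g_k$. Otherwise, observe that $\dim V_{k-1} = k-1 \leq n-1 < n \leq \dim H$, so $V_{k-1}^\perp \neq \{0\}$; pick $e \in V_{k-1}^\perp$ with $0 < \|e\| < \varepsilon$ and set $h_k = g_k + e$. If $h_k$ were in $V_{k-1}$, then $e = h_k - g_k$ would lie in $V_{k-1} \cap V_{k-1}^\perp = \{0\}$, contradicting $e \neq 0$. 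Thus $\{h_1, \ldots, h_k\}$ is linearly independent and $\|h_k - g_k\| \leq \|e\| < \varepsilon$, closing the induction.

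This yields a tuple $(h_1, \ldots, h_n) \in St(n,H)$ with each coordinate within $\varepsilon$ of the corresponding coordinate of $(g_1, \ldots, g_n)$, establishing density in the product topology on $H^n$. There is no real obstacle here: the only subtle point is the verification that $V_{k-1}^\perp$ is non-trivial, which rests solely on the standing assumption $\dim(H) \geq n$ together with the fact that finite-dimensional subspaces are closed so their orthogonal complements are well-behaved regardless of whether $H$ is finite or infinite dimensional.
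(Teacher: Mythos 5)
Your proof is correct, but it takes a genuinely different route from the paper. The paper's argument is global and analytic: it picks an arbitrary $\theta \in St(n,H)$, joins it to the target tuple $h$ by the straight segment $\gamma(t)=th+(1-t)\theta$, and observes that $t \mapsto \det(\Gram(\gamma(t)_1,\dots,\gamma(t)_n))$ is a polynomial that is nonzero at $t=0$, hence vanishes at only finitely many $t$; taking $t$ close to $1$ but outside this finite exceptional set produces elements of $St(n,H)$ arbitrarily close to $h$. That argument is deliberately chosen because it is the prototype of Proposition \ref{PropClosureSt(n,H)CapS} and Corollary \ref{CorSt(n,H)CapSDenseInS}, which generalize the density statement to sets of the form $St(n,H)\cap S$. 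Your argument is instead a local, coordinate-by-coordinate perturbation: you keep $g_k$ whenever it already lies outside $V_{k-1}=\Span(h_1,\dots,h_{k-1})$, and otherwise nudge it by a small vector in $V_{k-1}^\perp$, which is nontrivial because $V_{k-1}$ is a closed proper subspace (here $\dim V_{k-1}\le n-1 < \dim H$ is the only place the standing hypothesis $\dim(H)\ge n$ enters). This is more elementary --- no Gram matrices and no zero-counting for polynomials --- and it gives sharper quantitative information: each coordinate moves by less than $\varepsilon$ and most coordinates need not move at all. What it does not give is the path-theoretic statement that the approximants can be found on a prescribed segment through a fixed frame, which is exactly the feature the paper exploits later. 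Both proofs are complete; the only cosmetic point in yours is that coordinate-wise $\varepsilon$-closeness yields $\lVert h-g\rVert_{H^n}<\sqrt{n}\,\varepsilon$ in the product Hilbert norm, which of course still establishes density.
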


\begin{proof}
Consider $h = (h_1,\cdots,h_n) \in H^n$. Pick some $\theta = (\theta_1,\cdots,\theta_n) \in St(n,H)$. Let $\gamma$ be the straight path connecting $\theta$ to $h$, i.e. for each $t \in [0,1] : \gamma(t) = th + (1-t)\theta \in H^n$. Let $\Gamma(t) = \det(\Gram ((\gamma(t)_1, \cdots,\gamma(t)_n)))$. Clearly, $\Gamma(t)$ is a polynomial function in $t$ which satisfies $\Gamma(0) \neq 0$ since $\theta \in St(n,H)$. Therefore 
\[ \Gamma(t) \neq 0 \text{ except for a finite number of } t's. \] 
Moreover, 
\[ ||\gamma(t) - h||_{H^n}^2 = \sum_{k=1}^n ||\gamma(t)_k - h_k||_{H}^2 = \sum_{i=1}^n |1-t|^2 ||\theta_k - h_k||_{H}^2 \to 0 \mbox{ when t } \to 1 \]
Hence, there exists $t \in [0,1]$ such that $\gamma(t)$ is close to $h$ and $\Gamma(t) \neq 0$, and so $\gamma(t) \in St(n,H)$.
\end{proof}

We also include the following proposition on the differential structure of the Stiefel manifolds.

\begin{proposition} \cite{James}
We have
\begin{enumerate}
\item $St(n,\mathbb{R}^k)$ is a real manifold of dimension $nk$.
\item $St_o(n,\mathbb{R}^k)$ is a real manifold of dimension $nk - \frac{n(n+1)}{2}$.
\item $St(n,\mathbb{C}^k)$ is a real manifold of dimension $2nk$.
\item $St_o(n,\mathbb{C}^k)$ is a real manifold of dimension $2nk-n^2$.
\item If $\dim(H)=\infty$, then $St(n,H)$ and $St_o(n,H)$ are Hilbert manifolds of infinite dimension.
\end{enumerate}
\end{proposition}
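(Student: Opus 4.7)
Parts (1) and (3) are immediate from Proposition \ref{PropFramesOpenClosed}: $St(n,\mathbb{F}^k)$ is open in the ambient space $(\mathbb{F}^k)^n$, which is a real vector space of dimension $nk$ when $\mathbb{F}=\mathbb{R}$ and $2nk$ when $\mathbb{F}=\mathbb{C}$, so it inherits the canonical smooth manifold structure of the same dimension. Similarly, the first assertion of (5) is immediate: $St(n,H)$ is open in the Hilbert space $H^n$, hence a Hilbert manifold modeled on $H^n$, which is infinite-dimensional whenever $H$ is.

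The substantive content lies in parts (2), (4) and the second assertion of (5), all concerning the orthonormal variant $St_o(n,H)$. The plan is to apply the regular value theorem (or its Hilbert space analogue) to the Gram map
\[ \Gram : H^n \to V_{\mathbb{F}}, \qquad (h_1,\ldots,h_n) \mapsto \bigl(\langle h_k,h_l\rangle\bigr)_{1 \leq k,l \leq n}, \]
where $V_{\mathbb{R}}$ is the space of real symmetric $n\times n$ matrices (real dimension $\tfrac{n(n+1)}{2}$) and $V_{\mathbb{C}}$ is the space of Hermitian $n\times n$ matrices (real dimension $n^2$, since the $n$ diagonal entries are real and the $\binom{n}{2}$ strict upper-triangular entries are arbitrary complex numbers). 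This map is smooth because it is polynomial in the real coordinates, and by Proposition \ref{PropFramesOpenClosed} one already has $St_o(n,H) = \Gram^{-1}(I_n)$.

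The key computation is that $I_n$ is a regular value. At $h \in St_o(n,H)$, the differential reads
\[ d\Gram_h(v)_{k,l} = \langle v_k, h_l \rangle + \langle h_k, v_l \rangle, \]
and given $A \in V_{\mathbb{F}}$, the explicit choice $v_k := \tfrac{1}{2}\sum_{l=1}^n A_{k,l}\, h_l$ yields $d\Gram_h(v) = A$ by orthonormality of $\{h_1,\ldots,h_n\}$, with the identity $\overline{A_{l,k}} = A_{k,l}$ handling the conjugate linearity in the complex case. In finite dimensions, the regular value theorem then yields parts (2) and (4) with dimensions $\dim_{\mathbb{R}}(\mathbb{F}^k)^n - \dim_{\mathbb{R}} V_{\mathbb{F}}$, giving respectively $nk - \tfrac{n(n+1)}{2}$ and $2nk - n^2$. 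In infinite dimensions, the same surjective differential $d\Gram_h : H^n \to V_{\mathbb{F}}$ has automatically closed kernel of finite codimension; since $V_{\mathbb{F}}$ is finite-dimensional, this kernel is also topologically complemented in $H^n$, so the Hilbert space implicit function theorem applies and equips $St_o(n,H)$ with the structure of a Hilbert submanifold of $H^n$ modeled on the infinite-dimensional kernel of $d\Gram_h$.

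The main obstacle I expect is the differential computation together with exhibiting an explicit preimage for every $A \in V_{\mathbb{F}}$, in particular keeping track of conjugation conventions in the Hermitian case so that the target really is $V_{\mathbb{C}}$ and not a larger space; once surjectivity of $d\Gram_h$ is established, the rest is a standard invocation of the regular value theorem in finite dimensions and of its Banach/Hilbert space analogue in infinite dimensions.
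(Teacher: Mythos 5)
The paper states this proposition without proof, as a bare citation to James's monograph, so there is no internal argument to compare yours against; your write-up is the standard self-contained proof and it is correct. Parts (1), (3) and the first half of (5) do follow immediately from the openness of $St(n,H)$ in $H^n$ established in Proposition \ref{PropFramesOpenClosed}, and for the orthonormal case you correctly exploit the identification $St_o(n,H)=\Gram^{-1}(I_n)$ from the same proposition. The key surjectivity computation checks out under the paper's convention that $\langle\cdot,\cdot\rangle$ is linear in the first slot (forced by the definition $T_U(v)=(\langle v,u_x\rangle)_{x\in X}$ of the analysis operator): with $v_k=\tfrac12\sum_m A_{k,m}h_m$ one gets $\langle v_k,h_l\rangle=\tfrac12 A_{k,l}$ and $\langle h_k,v_l\rangle=\overline{\langle v_l,h_k\rangle}=\tfrac12\overline{A_{l,k}}=\tfrac12 A_{k,l}$, so $d\Gram_h(v)=A$; with the opposite convention you would need $v_k=\tfrac12\sum_m\overline{A_{k,m}}h_m$, which is exactly the bookkeeping you flag. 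The dimension counts match the real dimensions $\tfrac{n(n+1)}{2}$ and $n^2$ of the symmetric and Hermitian matrices, and in the infinite-dimensional case the kernel of the surjection onto a finite-dimensional space is closed of finite codimension and automatically complemented in a Hilbert space, so the Banach implicit function theorem applies and the model space is infinite-dimensional as required. The only additions I would make for completeness are a sentence noting that $\Gram$ is smooth because each entry is a continuous sesquilinear, hence real-bilinear, function of $(h_k,h_l)$, and a remark that the image of $d\Gram_h$ really lands in $V_{\mathbb{F}}$ since $d\Gram_h(v)_{l,k}=\overline{d\Gram_h(v)_{k,l}}$.
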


We ask the reader to keep in mind remark \ref{RemarkIsometry} when reading the remaining parts of this article.

\section{Path-connectedness of $\bigcap_{j \in J} \left( U(j) + St(n,H) \right)$}
\label{SectionPathConnectedness}

\begin{definition}
Let $E$ be a topological vector space and $\gamma : [0,1] \to E$ be a continuous path. We say that $\gamma$ is a \textit{polygonal path} if there exist $q \in \mathbb{N}^*$, $(e_k)_{k \in [\![1,q+1]\!]}$ a finite sequences with $e_k \in E$ for all $k \in [\![1,q+1]\!]$, and $(\gamma_k)_{k \in [\![1,q]\!]}$ a finite sequence of (continuous) straight paths with $\gamma_k = \begin{cases} [0,1] &\to E \\ t &\mapsto (1-t)e_k + te_{k+1} \end{cases}$ such that $\gamma = \gamma_1 * \cdots * \gamma_q$, where $*$ is the path composition operation. We say that a subset $S \subseteq E$ is polygonally connected if every two points of $S$ are connected by a polygonal path. 
\end{definition}

In the following, when we say that $S \subseteq E$ is polygonally connected, we mean that each two points of $S$ are connected by a polygonal path of the type $\gamma_1 * \gamma_2$ where $\gamma_1$ and $\gamma_2$ are two straight paths.

Before we prove the main proposition of this section, let's prove a useful lemma.

\begin{lemma}
\phantomsection
\label{LemmaSpanFrames}
\begin{enumerate}
\item Suppose we have a family $(a(j))_{j \in J}$ indexed by $J$ where each $a(j)$ belongs to $St(n,H)$. Then if $(a(j)_k)_{j \in J, k \in [\![1,n]\!]}$ is free, we have
\[ \Span( \{a(j)\}_{j \in J} ) \setminus \{0\} \subseteq St(n,H) \].
\item Suppose we have a family indexed by $J$ where each $a(j)$ belongs to $St_o(n,H)$ for all $j \in J$. Then if $(a(j)_k)_{j \in J, k \in [\![1,n]\!]}$ is an orthonormal system, we have
\[ \{ x \in \Span( \{a(j)\}_{j \in J} ) : \| x \| = \sqrt{n} \} \subseteq St_o(n,H) \]
\end{enumerate}
\end{lemma}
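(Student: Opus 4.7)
The plan is to take an arbitrary element $x$ of the span, expand it as a finite linear combination $x = \sum_{j \in F} \lambda_j a(j)$ with $F \subseteq J$ finite, and then read off the required properties component-by-component in $H$ by exploiting the hypothesis on the joint family $(a(j)_k)_{j \in J, k \in [1,n]}$.

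For part (1), I would first translate the freeness of $x = (x_1,\ldots,x_n) \in H^n$ into a statement about scalar relations. Suppose $\sum_{k=1}^n \mu_k x_k = 0$ in $H$. Substituting the expansion $x_k = \sum_{j \in F} \lambda_j a(j)_k$ yields the double sum $\sum_{j \in F}\sum_{k=1}^n \mu_k \lambda_j a(j)_k = 0$, and the freeness of the whole family $(a(j)_k)_{j,k}$ forces $\mu_k \lambda_j = 0$ for every admissible pair $(j,k)$. The hypothesis $x \neq 0$ in $H^n$ means that not all components vanish simultaneously; again by the freeness of $(a(j)_k)$, this forces at least one coefficient $\lambda_{j_0}$ to be nonzero, whence $\mu_k = 0$ for all $k$. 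This gives $(x_1,\ldots,x_n) \in St(n,H)$.

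For part (2), I would first compute the relevant inner products on $H$. With $x = \sum_{j \in F} \lambda_j a(j)$, orthonormality of $(a(j)_k)_{j,k}$ gives
\[
\langle x_k, x_l \rangle_H = \sum_{j,j' \in F} \lambda_j \overline{\lambda_{j'}} \langle a(j)_k, a(j')_l \rangle_H = \delta_{k,l}\sum_{j \in F} |\lambda_j|^2.
\]
In particular, summing over $k$ gives $\|x\|_{H^n}^2 = n \sum_{j \in F} |\lambda_j|^2$, so the normalization $\|x\| = \sqrt{n}$ is equivalent to $\sum_{j \in F}|\lambda_j|^2 = 1$. Plugging this back into the inner product formula above yields $\langle x_k, x_l \rangle = \delta_{k,l}$, meaning $x \in St_o(n,H)$.

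Neither step involves a genuine obstacle: the entire argument is a direct unfolding of the two hypotheses in terms of the coordinates of a generic span element. The only minor care point is the very first inclusion, where one must keep separate the notion of $x$ being zero as an element of $H^n$ (which, under the freeness hypothesis, is equivalent to all $\lambda_j$ vanishing) and the notion of $(x_1,\ldots,x_n)$ being free in $H$; the two get conflated if one is not careful about in which space one is working.
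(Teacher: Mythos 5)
Your proposal is correct and follows essentially the same route as the paper: expand a generic span element as a finite linear combination, use joint freeness to force the coefficients of any dependence relation to vanish in part (1), and use joint orthonormality plus the normalization $\|x\|=\sqrt{n}$ to get $\langle x_k,x_l\rangle=\delta_{k,l}$ in part (2). The only cosmetic difference is that you derive the existence of a nonzero $\lambda_{j_0}$ from $x\neq 0$ via freeness, whereas the paper simply notes that any representation of a nonzero vector must have a nonzero coefficient.
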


\begin{proof}
\begin{enumerate}
\item Let $h = (h_1,\cdots,h_n) \in \Span( \{a(j)\}_{j \in J} ) \setminus \{0\}$. We can write $h = \sum_{u=1}^r \lambda_u a(j_u)$ with $\lambda_u \in \mathbb{F}$ for all $u \in [\![1,r]\!]$ and the $\lambda_u$'s are not all zeros. We need to show that $(h_1,\cdots,h_n)$ is an independent system. Suppose otherwise $\sum_{k=1}^n c_k h_k = 0$. This means that $\sum_{k=1}^n c_k ( \sum_{u=1}^r \lambda_u a(j_u)_k) = 0$, and so $\sum_{k=1}^n \sum_{u=1}^r (\lambda_u c_k) a(j_u)_k = 0$. Since $\bigcup_{j \in J} \bigcup_{k \in [\![1,n]\!]} \{a(j)_k\}$ is free, we deduce that $\lambda_u c_k = 0$ for all $u \in [\![1,r]\!]$ and $k \in [\![1,n]\!]$, which implies that $c_k = 0$ for all $k \in [\![1,n]\!]$ since the $\lambda_u$'s are not all zeros. Therefore, $h \in St(n,H)$.

\item Let $h = (h_1,\cdots,h_n) \in \Span(\{a(j)\}_{j \in J})$ such that $\|h\| = \sqrt{n}$. We can write $h = \sum_{u=1}^r \lambda_u a(j_u)$ with $\lambda_u \in \mathbb{F}$ for all $u \in [\![1,r]\!]$. We need to show that $\langle h_k , h_l \rangle = \delta_{k,l}$ for all $k,l \in [\![1,n]\!]$. For $k \neq l$, we have : $\langle h_k , h_l \rangle = \langle \sum_{u=1}^r \lambda_u a(j_u)_k , \sum_{u=1}^r \lambda_u a(j_u)_l \rangle = \sum_{u=1}^r \sum_{v=1}^r \lambda_u \overline{\lambda_v} \langle a(j_u)_k , a(j_v)_l \rangle = 0$ since $\bigcup_{j \in J} \bigcup_{k \in [\![1,n]\!]} \{a(j)_k\}$ is an orthogonal system. Moreover, $\| h_k \|^2 = \sum_{u=1}^r \sum_{v=1}^r \lambda_u \overline{\lambda_v} \langle a(j_u)_k , a(j_v)_k \rangle = \sum_{u=1}^r |\lambda_u|^2 \| a(j_u)_k \| ^2 = \sum_{u=1}^r |\lambda_u|^2$ since $\bigcup_{j \in J} \bigcup_{k \in [\![1,n]\!]} \{a(j)_k\}$ is an orthonormal system. By hypothesis, $n = \| h \| ^ 2 = \sum_{k=1}^n \| h_k \|^2 = n (\sum_{u=1}^r |\lambda_u|^2)$, so $\| h_k \|^2 = \sum_{u=1}^r |\lambda_u|^2 = 1$ for all $k \in [\![1,n]\!]$ as desired.
\end{enumerate}
\end{proof}

\begin{proposition}
Let $H$ be a Hilbert space with $\dim(H) \geq n$, $J$ an index set and $(U(j))_{j \in J}$ a family with $U(j) \in H^n$ for all $j \in J$. If $\codim_H(\Span(\{u(j)_k : j \in J, k \in [\![1,n]\!]\})) \geq 3n$, then $\bigcap_{j \in J} (U(j) + St(n,H))$ is polygonally-connected. 
\end{proposition}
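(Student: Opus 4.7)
The plan is to show polygonal-connectedness by choosing, for any two points $x, y \in \bigcap_{j \in J}(U(j) + St(n,H))$, an intermediate point $z$ such that the straight segments $[x,z]$ and $[z,y]$ both lie inside the intersection; their concatenation is the required polygonal path. For each $j \in J$, writing $\alpha_j := x - U(j)$, $\beta_j := z - U(j)$, $\gamma_j := y - U(j)$, this amounts to proving $\beta_j \in St(n,H)$ and that both segments $[\alpha_j, \beta_j]$ and $[\beta_j, \gamma_j]$ remain in $St(n,H)$.

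First, I exploit the codimension hypothesis by an algebraic decomposition. Set $V := \Span(\{u(j)_k : j \in J, k \in [1,n]\})$ and fix any algebraic complement $W$ of $V$ in $H$, so $H = V \oplus W$ with $\dim W = \codim_H V \geq 3n$. Write $h = h^V + h^W$ for the decomposition of any $h \in H$. Since $\Span(\{x_k^W, y_k^W : k \in [1,n]\})$ has dimension at most $2n$ inside $W$, I can pick a subspace $W' \subseteq W$ of dimension at least $n$ in direct sum with it, then choose $n$ linearly independent vectors $z_1, \ldots, z_n \in W'$ and set $z := (z_1, \ldots, z_n)$.

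The heart of the proof is to verify, for each $j \in J$, that the $2n$-element family $\{\alpha_j^k, \beta_j^k : k \in [1,n]\}$ is linearly independent in $H$, so that Lemma \ref{LemmaSpanFrames}(1) applies to the two-element family $\{\alpha_j, \beta_j\}$. Given a relation $\sum_k a_k \alpha_j^k + \sum_k b_k \beta_j^k = 0$, I extract the $W$-component: since each $u(j)_k \in V$, this reduces to $\sum_k a_k x_k^W + \sum_k b_k z_k = 0$. Here $\sum_k a_k x_k^W \in \Span(\{x_k^W\})$ and $\sum_k b_k z_k \in W'$ are in direct sum by construction, so both vanish; freeness of $z_1, \ldots, z_n$ then forces $b_k = 0$ for all $k$, and the leftover relation $\sum_k a_k \alpha_j^k = 0$ combined with $\alpha_j \in St(n,H)$ forces $a_k = 0$. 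The identical argument with $y$ in place of $x$ proves freeness of $\{\beta_j^k, \gamma_j^k : k \in [1,n]\}$.

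Lemma \ref{LemmaSpanFrames}(1) then yields $\Span(\alpha_j, \beta_j) \setminus \{0\} \subseteq St(n,H)$, and analogously $\Span(\beta_j, \gamma_j) \setminus \{0\} \subseteq St(n,H)$. The segment $t \mapsto (1-t)\alpha_j + t\beta_j$ is never zero on $[0,1]$ — otherwise $\alpha_j$ and $\beta_j$ would be proportional, contradicting the $2n$-component freeness — so it stays in $St(n,H)$; translating back by $U(j)$ places the segment from $x$ to $z$ inside $U(j) + St(n,H)$ for every $j$, and the analogous reasoning handles the segment from $z$ to $y$. I expect the main obstacle to be precisely the uniformity of the component-freeness check across all $j \in J$ simultaneously; the codimension bound $\geq 3n$ is used exactly once and tightly, reserving $n$ directions each for the $W$-projections of $x$, of $y$, and for the components of the auxiliary point $z$.
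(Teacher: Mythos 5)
Your proof is correct and follows essentially the same route as the paper's: both construct a two-segment polygonal path through an auxiliary point $Z=(z_1,\dots,z_n)$ whose components are chosen, thanks to the codimension hypothesis, in general position with respect to the $x_k$, $y_k$ and $u(j)_k$, and then apply Lemma \ref{LemmaSpanFrames}(1) to each segment after translating by $-U(j)$. The only difference is cosmetic: you make explicit, via the decomposition $H=V\oplus W$ and projection onto $W$, the $2n$-component freeness check that the paper phrases more tersely as a trivial-intersection-of-spans condition.
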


\begin{proof}
Let $X = (x_1,\cdots,x_n)$ and $Y = (y_1,\cdots,y_n)$ in $\bigcap_{j \in J} (U(j) + St(n,H))$. \\
Let $(z_1, \cdots, z_n)$ be an independent family in $H$ such that
\begin{align*}
&\Span(\{z_k : k \in [\![1,n]\!]\}) \bigcap \\
&\quad \Span\left(\{x_k : k \in [\![1,n]\!]\} \bigcup \{y_k : k \in [\![1,n]\!]\} \bigcup \{u(j)_k : j \in J, k \in [\![1,n]\!]\} \right) = \{0\}
\end{align*}
This is possible since $\codim_H(\Span(\{u(j)_k : j \in J, k \in [\![1,n]\!]\}) \geq 3n$. \\
This ensures that we have for all $j \in J$
\[ \Span( \{-u(j)_k + z_k : k \in [\![1,n]\!] \}) \cap \Span(\{-u(j)_k + x_k : k \in [\![1,n]\!]\}) = \{0\}, \]
\[ \Span( \{-u(j)_k + z_k : k \in [\![1,n]\!] \}) \cap \Span(\{-u(j)_k + y_k : k \in [\![1,n]\!]\}) = \{0\}, \]
and 
\[ (-u(j)_k + z_k)_{k \in [\![1,n]\!]} \text{ is independent.} \]
We define the straight paths 
\[
\begin{cases} \gamma_1 &: [0,1] \to H^n \\
\gamma_2 &: [0,1] \to H^n \end{cases}
\]
by $\gamma_1(t) = tZ + (1-t)X$ and $\gamma_2(t) = tY + (1-t)Z$ respectively.  \\
We have $\gamma_1(0)=X$, $\gamma_1(1)=\gamma_2(0)=Z$, and $\gamma_2(1)=Y$. \\
Since for all $j \in J$
\[ \Span( \{-u(j)_k + z_k : k \in [\![1,n]\!] \}) \cap \Span(\{-u(j)_k + x_k : k \in [\![1,n]\!]\}) = \{0\}, \]
we have for all $t \in [0,1]$ and $j \in J$ 
\[ -U(j) + tZ + (1-t)X = t(-U(j) + Z) + (1-t)(-U(j) + X) \in St(n,H) \]
by lemma \ref{LemmaSpanFrames}, and so $\gamma_1(t) \in \bigcap_{j \in J} (U(j) + St(n,H))$. \\
Similarly, $\gamma_2(t) \in \bigcap_{j \in J} (U(j) + St(n,H))$ for all $t \in [0,1]$. \\
Composing $\gamma_1$ with $\gamma_2$, we see that $\bigcap_{j \in J} (U(j) + St(n,H))$ is polygonally connected.
\end{proof}

\section{Topological closure of $St(n,H) \cap S$}
\label{SectionRelativeDensity}

Before moving on, we need a definition and a small lemma.

\begin{definition}
Let $V$ be a $\mathbb{F}$-vector space, $q \in \mathbb{N}$, and $v,v' \in V$. We say that $\gamma : [0,1] \to V$ is a \textit{polynomial path up to reparametrization joining $v$ and $v'$} if there exist $q \in \mathbb{N}$, a finite sequence of vectors $(v^k)_{k \in [\![0,q]\!]}$ with $v^k \in V$ for all $k \in [\![0,q]\!]$ and a homeomorphism $\phi : [0,1] \to [a,b] \subseteq \mathbb{R}$ such that $\forall t \in [a,b] : \gamma(\phi^{-1}(t)) = \sum_{k=0}^q t^k v^k$, $\gamma(0)=v$ and $\gamma(1)=v'$. If $V$ is equipped with a topology, then we say that $\gamma$ is a \textit{continuous polynomial path} when it is continuous as a map from $[0,1]$ to $V$.
\end{definition}

\begin{remark}
Every expression of the form $\sum_{k=0}^q P_k(t)v^k$ where $v^k \in V$ and $P_k \in \mathbb{F}[X]$ for all $k \in [\![0,q]\!]$ can be written in the form $\sum_{k=0}^{q'} t^k w^k$ where $q' \in \mathbb{N}$ and $w^k \in V$ for all $k \in [\![0,q']\!]$ (group by increasing powers of $t$). Therefore there is no difference whether we define polynomial paths using the first expression or the second.
\end{remark}

\begin{lemma}
\label{LemmaPolynomialPathsBoundedDegreeInNormedVectorSpaceContinuous}
Let $E$ be a normed vector space, and $v,v' \in E$. Then each polynomial path $\gamma : [0,1] \to E$ up to reparametrization joining $v$ and $v'$ is continuous.
\end{lemma}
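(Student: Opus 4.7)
The plan is to exploit the defining formula for a polynomial path up to reparametrization and factor $\gamma$ through the homeomorphism $\phi$. By definition, there exist $q \in \mathbb{N}$, vectors $v^0, \dots, v^q \in E$, and a homeomorphism $\phi : [0,1] \to [a,b]$ such that
\[ \gamma(\phi^{-1}(t)) = \sum_{k=0}^q t^k v^k \qquad \text{for all } t \in [a,b]. \]
In particular, writing $\psi = \gamma \circ \phi^{-1} : [a,b] \to E$, one has $\gamma = \psi \circ \phi$, so continuity of $\gamma$ reduces to continuity of $\psi$.

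The next step is to argue that $\psi$ is continuous on $[a,b]$. For each $k \in [0,q]$, the scalar function $t \mapsto t^k$ is continuous on $[a,b]$, and scalar multiplication $\mathbb{F} \times E \to E$ is continuous in any normed vector space, so $t \mapsto t^k v^k$ is continuous from $[a,b]$ to $E$. A finite sum of continuous $E$-valued functions is continuous (addition is continuous in $E$), hence $\psi(t) = \sum_{k=0}^q t^k v^k$ is continuous on $[a,b]$.

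Finally, $\phi$ is continuous as a homeomorphism, so $\gamma = \psi \circ \phi$ is a composition of continuous maps and therefore continuous. This concludes the proof. I do not expect any genuine obstacle here: the statement is essentially the well-known fact that polynomial maps with values in a normed vector space are continuous, transported along a homeomorphic reparametrization.
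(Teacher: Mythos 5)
Your proposal is correct and follows essentially the same route as the paper: both reduce to the identity $\gamma(s) = \sum_{k=0}^q \phi(s)^k v^k$ and conclude from the continuity of $s \mapsto \phi(s)^k$ together with the continuity of the vector space operations. The only difference is presentational --- you phrase it as a composition $\gamma = \psi \circ \phi$ of continuous maps, whereas the paper writes out the triangle-inequality estimate $\lVert \gamma(t) - \gamma(t') \rVert \leq \sum_{k=0}^q \lVert v^k \rVert \, |\phi(t)^k - \phi(t')^k|$ directly.
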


\begin{proof}
Let $\gamma : [0,1] \to E$ be a polynomial path up to reparametrization joining $v$ and $v'$. Hence we can write $\forall t \in [0,1] : \gamma(t) = \sum_{k=0}^q \phi(t)^k v^k$. The continuity of $\gamma$ follows from
\[ \lVert \gamma(t) - \gamma(t') \rVert \leq \sum_{k=0}^q \lVert v^k \rVert |\phi(t)^k - \phi(t')^k) | \]
which goes to 0 when $t$ goes to $t'$ by continuity of $\phi^k$ for all $k \in [\![0,q]\!]$.
\end{proof}

The following is our first original proposition in this section.

\begin{proposition}
\label{PropClosureSt(n,H)CapS}
Let $S \subseteq H^n$ and 
\begin{align*}
E := \{ \gamma : [0,1] \to S \text{ such that } &\gamma \text{ is a polynomial path up to reparametrization and } \\
&\exists a_\gamma \in [0,1] : \gamma(a_\gamma) \in St(n,H) \cap S \}.
\end{align*}
Then $\bigcup_{\gamma \in E} Range(\gamma) \subseteq \overline{St(n,H) \cap S}$
\end{proposition}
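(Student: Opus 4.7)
The plan is to generalize the argument already used in Proposition \ref{PropSt(n,H)Dense} by replacing the straight path there with an arbitrary polynomial path up to reparametrization, and exploiting the fact that polynomial non-vanishing is an open condition with only finitely many exceptions on any interval.

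First I would fix $\gamma \in E$ and a target point $\gamma(t_0)$ on its range, and observe that since $\gamma$ is a polynomial path up to reparametrization, we may write $\gamma(t) = \sum_{k=0}^{q} \phi(t)^k v^k$ for all $t \in [0,1]$, where $\phi : [0,1] \to [a,b]$ is a homeomorphism and $v^k \in H^n$. Writing the components $\gamma(t)_i = \sum_{k=0}^{q} \phi(t)^k v^k_i$, each Gram matrix entry $\langle \gamma(t)_i, \gamma(t)_j \rangle$ expands as a polynomial in the real (or complex) variable $s = \phi(t)$ with coefficients $\langle v^k_i, v^\ell_j \rangle$. Consequently
\[
\Gamma(t) := \det\bigl(\Gram(\gamma(t)_1, \ldots, \gamma(t)_n)\bigr) = P(\phi(t))
\]
for some polynomial $P \in \mathbb{F}[X]$ of degree at most $2qn$.

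Next I would use the hypothesis that $\gamma(a_\gamma) \in St(n,H) \cap S$, which by Proposition \ref{PropCharFrames} (or the definition of $St(n,H)$ via the Gram determinant) gives $P(\phi(a_\gamma)) \neq 0$, so $P$ is not the zero polynomial. Hence $P$ has only finitely many zeros in $[a,b]$, and because $\phi$ is a homeomorphism, the set $Z := \{t \in [0,1] : \Gamma(t) = 0\}$ is finite. For any $t \in [0,1] \setminus Z$, the components of $\gamma(t)$ are linearly independent, so $\gamma(t) \in St(n,H)$; combined with $\gamma(t) \in S$, this yields $\gamma(t) \in St(n,H) \cap S$.

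Finally, given the target $t_0 \in [0,1]$, I would pick a sequence $(t_m)_{m \in \mathbb{N}}$ in $[0,1] \setminus Z$ converging to $t_0$ (possible because $Z$ is finite while $[0,1]$ is uncountable). By Lemma \ref{LemmaPolynomialPathsBoundedDegreeInNormedVectorSpaceContinuous}, $\gamma$ is continuous, so $\gamma(t_m) \to \gamma(t_0)$ in $H^n$; since $\gamma(t_m) \in St(n,H) \cap S$ for all $m$, this shows $\gamma(t_0) \in \overline{St(n,H) \cap S}$, concluding the proof. There is essentially no serious obstacle here: the only mild subtlety is reducing the determinant of $\Gram(\gamma(t))$ to a genuine polynomial in a single variable, which the structural definition of a polynomial path up to reparametrization makes immediate once one composes with $\phi$.
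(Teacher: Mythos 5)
Your argument is correct and follows essentially the same route as the paper: expand the Gram entries of $\gamma(\phi^{-1}(t))$ as polynomials in $t$, use $\gamma(a_\gamma) \in St(n,H)$ to conclude the determinant vanishes only at finitely many parameters, and then invoke the continuity of $\gamma$ (Lemma \ref{LemmaPolynomialPathsBoundedDegreeInNormedVectorSpaceContinuous}) to absorb the exceptional points into the closure. Your sequence-based phrasing of the last step is just a pointwise restatement of the paper's observation that $Range(\gamma)$ equals the closure of $Range(\gamma)$ minus the finitely many exceptional values.
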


\begin{proof}
Let $\gamma \in E$.  We have $\forall t \in [a,b] : \gamma(\phi^{-1}(t)) = \sum_{k=0}^q t^k V^k$. \\
Let $\Gamma(t) := \det(\Gram((\gamma(\phi^{-1}(t))_1, \cdots, \gamma(\phi^{-1}(t))_n)))$ for all $t \in [a,b]$. \\
Since for all $i,j \in [\![1,n]\!]$ and $t \in [a,b]$
\begin{align*}
\langle \gamma(\phi^{-1}(t))_i , \gamma(\phi^{-1}(t))_j \rangle &= \left \langle \sum_{k=0}^q t^k v_i^k , \sum_{k=0}^q t^k v_j^k \right\rangle \\
&= \sum_{k,k'=0}^q \left\langle v_i^k, v_j^{k'} \right\rangle t^{k+k'}
\end{align*} 
is a polynomial function in $t \in [a,b]$, and the determinant of a matrix in $M_{n,n}(\mathbb{F})$ is a polynomial function in its coefficients, $\Gamma(t)$ is a polynomial function in $t \in [a,b]$ which satisfies $\Gamma(\phi(a_\gamma)) \neq 0$ since $\gamma(a_\gamma) \in St(n,H)$. Therefore
\[ \Gamma(t) \neq 0 \text{ for t in a cofinite set } L \subseteq [a,b]. \]
Hence $Range(\gamma) \setminus \{\gamma(\phi^{-1}(t))\}_{t \in [a,b] \setminus L} \subseteq St(n,H) \cap S \subseteq \overline{ St(n,H) \cap S }$.
Since $[a,b] \setminus L$ is finite, the continuity of $\gamma$ (see lemma \ref{LemmaPolynomialPathsBoundedDegreeInNormedVectorSpaceContinuous}) at $\{ \phi^{-1}(t))\}_{t \in [a,b] \setminus L}$ implies 
\[ Range(\gamma) = \overline{Range(\gamma)} = \overline{Range(\gamma) \setminus \{\gamma(\phi^{-1}(t))\}_{t \in [a,b] \setminus L} } \subseteq \overline{ St(n,H) \cap S }. \]
This being true for all $\gamma \in E$, the result follows.
\end{proof}

The following is a corollary.

\begin{corollary}
\label{CorSt(n,H)CapSDenseInS}
Let $S \subseteq H^n$ such that for all $U \in S$ there exists a polynomial path up to reparametrization connecting $U$ to some $\Theta(U) \in St(n,H) \cap S$ and contained in $S$. Then $St(n,H) \cap S$ is dense in $S$.
\end{corollary}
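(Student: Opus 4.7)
The plan is to derive this corollary as an almost immediate consequence of Proposition \ref{PropClosureSt(n,H)CapS}. The hypothesis provides, for every $U \in S$, a polynomial path up to reparametrization $\gamma_U : [0,1] \to S$ with $\gamma_U(0) = U$ and $\gamma_U(1) = \Theta(U) \in St(n,H) \cap S$, whose image stays inside $S$. This is exactly the data needed to place $\gamma_U$ into the set $E$ of the preceding proposition: $\gamma_U$ takes values in $S$, is a polynomial path up to reparametrization, and there exists a point (namely $a_{\gamma_U} = 1$) at which $\gamma_U$ lands in $St(n,H) \cap S$.

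From the proposition we then get $\text{Range}(\gamma_U) \subseteq \overline{St(n,H) \cap S}$. In particular, $U = \gamma_U(0) \in \overline{St(n,H) \cap S}$. Since $U \in S$ was arbitrary, we conclude $S \subseteq \overline{St(n,H) \cap S}$, which is precisely the statement that $St(n,H) \cap S$ is dense in $S$ (relative to the subspace topology inherited from $H^n$).

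There is no real obstacle here; the work has all been done in Proposition \ref{PropClosureSt(n,H)CapS}. The only small point to double-check is that the defining condition of $E$ matches the hypothesis of the corollary exactly (values in $S$, polynomial up to reparametrization, and at least one point sent into $St(n,H) \cap S$), and that we invoke the proposition for the family of paths $\{\gamma_U\}_{U \in S}$ simultaneously rather than for a single path. Thus the proof reduces to a one-line application of the preceding proposition.
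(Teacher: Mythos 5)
Your proof is correct and follows exactly the same route as the paper: for each $U \in S$ the hypothesized path lies in $E$ (with $a_\gamma = 1$), so Proposition \ref{PropClosureSt(n,H)CapS} gives $U \in \mathrm{Range}(\gamma) \subseteq \overline{St(n,H) \cap S}$. Nothing is missing.
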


\begin{proof}
For all $U \in S$, there exists by the hypothesis $\gamma \in E$ such that $\gamma(0) = U$. By proposition \ref{PropClosureSt(n,H)CapS}, we have $Range(\gamma) \subseteq \overline{St(n,H) \cap S}$. It follows that $U \in \overline{St(n,H) \cap S}$ since $U \in Range(\gamma)$.
\end{proof}

This result shows the abundance of independent $n$-frames not only in $H^n$ but also in many subsets $S$ of the form of corollary \ref{CorSt(n,H)CapSDenseInS}. Importantly, notice that for $S \neq \emptyset$, there should exist at least one $\Theta \in St(n,H) \cap S$ (i.e. $St(n,H) \cap S \neq \emptyset$) for the result to follow. 

As an example, this is true when $S$ is a star domain of $H^n$ with respect to some $\Theta \in St(n,H) \cap S$; if it is a convex subset of $H^n$ and contains some $\Theta \in St(n,H) \cap S$; and if it is in particular an affine subspace containing some $\Theta \in St(n,H) \cap S$.

In the next section, we will find sufficient conditions under which some sets of the form $f^{-1}(\{d\}) \subseteq L^2(X,\mu;\mathbb{F}^n)$ where $f$ is some linear or quadratic function contain a continuous frame. This will allow us to apply corollary \ref{CorSt(n,H)CapSDenseInS} to these examples, which will be done in section \ref{SectionExamples}.

\section{Existence of solutions of some linear and quadratic equations which are finite-dimensional continuous frames}
\label{SectionFrameSolutionLinQuadraticEq}

In this section, we show how to construct continuous finite-dimensional frames that are mapped to a given element by a linear operator or a quadratic function. In other words, we show the existence of frames in the inverse image of singletons by these functions.

\subsection{Linear equations}

\begin{proposition}
\label{PropT^(-1)(d)ContainsIndependentNTuplesTArbitraryLinearFormCondDimV}
Let $n \in \mathbb{N}^*$, $V$ an $\mathbb{F}$-vector field of dimension $\geq n$ and $T : V^n \to \mathbb{F}$ a non-zero linear form. Then for all $d \neq 0$, there exists $(a_1,\cdots,a_n) \in V^n$ such that the system $(a_1,\cdots,a_n)$ is free and $T(a_1,\cdots,a_n)=d$.
\end{proposition}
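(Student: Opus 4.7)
The plan is to first produce any free family $a = (a_1, \ldots, a_n) \in V^n$ (which exists since $\dim V \geq n$) and split on the value of $T(a) \in \mathbb{F}$. In the easy case $T(a) \neq 0$, one rescales: since $d \neq 0$, the scalar $\lambda := d/T(a)$ is nonzero, so the family $\lambda \cdot a = (\lambda a_1, \ldots, \lambda a_n)$ remains free and satisfies $T(\lambda a) = \lambda T(a) = d$ by linearity.

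In the harder case $T(a) = 0$, the surjectivity of the non-zero linear form $T$ onto the field $\mathbb{F}$ furnishes some (not necessarily free) $w \in V^n$ with $T(w) = d$, and I will show that the perturbed family $\gamma(\mu) := w + \mu a$ is free for all but finitely many $\mu \in \mathbb{F}$. By linearity $T(\gamma(\mu)) = d + \mu T(a) = d$ for every $\mu$, so any such $\gamma(\mu)$ is the desired family. The freeness claim is essentially the same polynomial-non-vanishing trick as in proposition \ref{PropSt(n,H)Dense}, but adapted to the absence of any inner product on $V$: one restricts to the finite-dimensional subspace $W := \Span(\{w_1, \ldots, w_n\} \cup \{a_1, \ldots, a_n\}) \subseteq V$ of dimension $m \leq 2n$, picks any basis of $W$, and writes the coordinate matrix of $\gamma(\mu)$ as $M(\mu) = M^w + \mu M^a \in M_{m,n}(\mathbb{F})$. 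Since $(a_1, \ldots, a_n)$ is free, $M^a$ has rank $n$, so some $n$-row selection $I$ satisfies $\det(M^a|_I) \neq 0$; this makes $\det(M(\mu)|_I)$ a polynomial in $\mu$ of degree exactly $n$ with nonzero leading coefficient $\det(M^a|_I)$, whose at most $n$ roots in the infinite field $\mathbb{F} \in \{\mathbb{R},\mathbb{C}\}$ are the only values of $\mu$ to avoid.

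The main obstacle is exactly this adaptation of the Gram-determinant argument of proposition \ref{PropSt(n,H)Dense} to an abstract $\mathbb{F}$-vector space $V$ carrying no inner product; the resolution through a finite-dimensional spanning subspace and a matrix-minor rank condition is standard but should be written out carefully. All other ingredients — surjectivity of a non-zero scalar-valued linear form, preservation of linear independence under multiplication by a nonzero scalar, and the use of $d \neq 0$ to keep $d/T(a)$ nonzero in the easy case — are immediate.
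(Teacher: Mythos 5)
Your proof is correct, but it takes a genuinely different route from the paper's. The paper first establishes a standalone lemma (Lemma \ref{LemmaAnyNTupleIsLinearCombinationOfFreeeSystems}): every nonzero $n$-tuple in $V^n$ is a sum of at most two free systems (splitting $(x_1,\dots,x_n)$ as $(\tfrac{x_1}{2},\dots,\tfrac{x_k}{2},b_1,\dots,b_{n-k})+(\tfrac{x_1}{2},\dots,\tfrac{x_k}{2},x_{k+1}-b_1,\dots,x_n-b_{n-k})$ after completing a maximal free subfamily). It then argues by contradiction: if $T$ vanished on every free system, linearity plus the decomposition would force $T=0$; hence some free $a$ has $T(a)\neq 0$, and one rescales by $d/T(a)$ exactly as in your easy case. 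Your argument replaces the paper's hard case by a perturbation: take $w$ with $T(w)=d$ and any free $a$ with $T(a)=0$, and show $w+\mu a$ is free for all but finitely many $\mu$ via a nonvanishing minor of the coordinate matrix in a finite-dimensional ambient subspace. Both are complete. The trade-offs: the paper's decomposition lemma is purely algebraic, is flagged as being of independent interest, and does not need the scalar field to be infinite (though, as written, it does divide by $2$); your genericity argument is in the same spirit as the polynomial-nonvanishing proofs of Propositions \ref{PropSt(n,H)Dense} and \ref{PropClosureSt(n,H)CapS} elsewhere in the paper, but it genuinely uses that $\mathbb{F}\in\{\mathbb{R},\mathbb{C}\}$ is infinite in order to dodge the at most $n$ roots of $\det(M(\mu)|_I)$. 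One small remark: your case split is necessary as stated, since the perturbation along $a$ only preserves $T(\cdot)=d$ when $T(a)=0$; you handle this correctly.
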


The proof relies on the following lemma, which may be of independent interest.

\begin{lemma}
\label{LemmaAnyNTupleIsLinearCombinationOfFreeeSystems}
Let $n \in \mathbb{N}^*$ and $V$ be a vector space of dimension $\geq n$. Let $(x_1,\cdots,x_n) \in V^n \setminus \{(0,\cdots,0)\}$. Then there exist $e \in \mathbb{N}^*$ with $e = \begin{cases} 1 \text{ if } k=n \\ 2 \text{ if } k \in [\![1,n-1]\!] \end{cases}$ where $k = \dim(\Span\{x_1, \cdots, x_n\})$, and $e$ independent systems $(a^u_1,\cdots,a^u_n)$ in $V$ for $u \in [1,e]$ such that $(x_1,\cdots,x_n) = \sum_{u=1}^e (a^u_1,\cdots,a^u_n)$.
\end{lemma}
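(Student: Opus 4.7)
The plan is to split into cases depending on the value of $k = \dim \Span(x_1, \ldots, x_n)$. When $k = n$, the tuple $(x_1, \ldots, x_n)$ is itself free, so I would simply take $e = 1$ and $(a^1_1, \ldots, a^1_n) := (x_1, \ldots, x_n)$.

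When $1 \leq k \leq n - 1$, the strategy is to write $x_i = (x_i - t w_i) + t w_i$ for a well-chosen free family $(w_1, \ldots, w_n)$ in $V$ and a well-chosen scalar $t \in \mathbb{F}\setminus \{0\}$, and then set $a^1_i := x_i - t w_i$ and $a^2_i := t w_i$. To align the algebra with the structure of the $x_i$'s, I would first pick a basis $(w_1, \ldots, w_k)$ of $W := \Span(x_1, \ldots, x_n)$ and extend it to $n$ linearly independent vectors $(w_1, \ldots, w_n)$ in $V$, which is possible since $\dim V \geq n$. Writing $x_i = \sum_{j=1}^n \alpha_{ij} w_j$ then produces a matrix $\alpha \in M_n(\mathbb{F})$ (whose last $n-k$ columns vanish, since each $x_i$ lies in $W$). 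With this choice of reference basis, the coordinates of $a^1_i(t) = x_i - t w_i$ in $(w_1, \ldots, w_n)$ form precisely the matrix $\alpha - t I_n$, so $(a^1_i(t))_i$ is free if and only if $\det(\alpha - t I_n) \neq 0$, while $(a^2_i(t))_i = (t w_i)_i$ is free if and only if $t \neq 0$.

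The concluding step is purely existential. The polynomial $p(t) := \det(\alpha - t I_n)$ has degree $n$ with leading coefficient $(-1)^n$, hence is non-zero and has at most $n$ roots in $\mathbb{F}$. Since $\mathbb{F} \in \{\mathbb{R}, \mathbb{C}\}$ is infinite, I can pick $t \in \mathbb{F} \setminus (\{0\} \cup \{\text{roots of } p\})$, and the corresponding pair $(a^1, a^2)$ provides the required decomposition.

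I do not foresee a serious obstacle. The only delicate point is to choose the reference basis $(w_j)$ so that every $x_i$ lies in $\Span(w_1, \ldots, w_n)$; this is exactly why I extend a basis of $W$ inside $V$ rather than select $(w_j)$ arbitrarily, and it is where the hypothesis $\dim V \geq n$ is used.
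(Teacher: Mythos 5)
Your proof is correct, but it takes a genuinely different route from the paper's. The paper argues explicitly: after reindexing so that $(x_1,\dots,x_k)$ is a maximal free subfamily, it completes this to a free family $(x_1,\dots,x_k,b_1,\dots,b_{n-k})$ and writes $(x_1,\dots,x_n)$ as the sum of $(\tfrac{x_1}{2},\dots,\tfrac{x_k}{2},b_1,\dots,b_{n-k})$ and $(\tfrac{x_1}{2},\dots,\tfrac{x_k}{2},x_{k+1}-b_1,\dots,x_n-b_{n-k})$, checking directly that both summands are free. You instead fix a reference free family $(w_1,\dots,w_n)$ extending a basis of $\Span(x_1,\dots,x_n)$, consider the one-parameter splitting $x_i=(x_i-tw_i)+tw_i$, and observe that the first summand is free exactly when $\det(\alpha-tI_n)\neq 0$, so a generic $t\neq 0$ works because a degree-$n$ polynomial has finitely many roots in the infinite field $\mathbb{F}\in\{\mathbb{R},\mathbb{C}\}$. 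Your argument is uniform (no reindexing or case analysis on which $x_i$'s are independent) and is in the same spirit as the determinant-polynomial genericity tricks used elsewhere in the paper (e.g.\ in the proofs of Propositions \ref{PropSt(n,H)Dense} and \ref{PropClosureSt(n,H)CapS}); its only cost is that it needs the scalar field to have more than $n+1$ elements, whereas the paper's explicit decomposition is constructive and would work over any field in which $2$ is invertible. Both correctly yield $e=1$ when $k=n$ and $e=2$ when $1\leq k\leq n-1$.
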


\begin{proof}
Without loss of generality, suppose that $(x_1, \cdots, x_k)$ is free where $k = \dim(\Span\{x_1, \cdots, x_n\}) \geq 1$. Let $(b_i)_{i \in [\![1,n-k]\!]}$ be an $(n-k)$-tuple of vectors of $V$ such that the system $(x_1,\cdots,x_k,b_1,\cdots,b_{n-k})$ is free.
\begin{itemize}
\item If $k=n$, then $(x_1, \cdots, x_n)$ is already free and so we can choose $e=1$ and $a^1_i = x_i$ for all $i \in [\![1,n]\!]$.
\item If $k \in [\![1,n-1]\!]$, we can write 
\[ (x_1, \cdots, x_n) = (\frac{x_1}{2},\cdots,\frac{x_k}{2},b_1,\cdots,b_{n-k}) + (\frac{x_1}{2},\cdots,\frac{x_k}{2},x_{k+1}-b_1,\cdots,x_n - b_{n-k}). \]
 $(\frac{x_1}{2},\cdots,\frac{x_k}{2},b_1,\cdots,b_{n-k})$ is free by assumption, and we can easily show that $(\frac{x_1}{2},\cdots,\frac{x_k}{2},x_{k+1}-b_1,\cdots,x_n - b_{n-k})$ is free by expressing $x_{k+1},\cdots,x_n$ in terms of $x_1,\cdots,x_k$. Hence we can choose $e=2$ and $a^1_i = a^2_i = \frac{x_i}{2}$ for all $i \in [\![1,k]\!]$, $a^1_i = b_{i-k}$, and $a^2_i = x_i - b_{i-k}$ for all $i \in [\![k+1,n]\!]$. 
\end{itemize}
\end{proof}

\begin{proof} (of proposition \ref{PropT^(-1)(d)ContainsIndependentNTuplesTArbitraryLinearFormCondDimV}) \\
Let's show that there exists a free system $(a_1,\cdots,a_n)$ such that $T(a_1,\cdots,a_n) \neq 0$. Suppose to the contrary that $T(a_1,\cdots,a_n) = 0$ for all free systems $(a_1,\cdots,a_n)$ in $V$. Let $(x_1,\cdots,x_n) \in V^n \setminus \{(0,\cdots,0)\}$. By lemma \ref{LemmaAnyNTupleIsLinearCombinationOfFreeeSystems}, $(x_1,\cdots,x_n)$ decomposes as a finite sum of free systems. By linearity of $T$, we thus have $T(x_1,\cdots,x_n)=0$. Hence $T$ is the zero form, a contradiction. Hence there exists a free system $(a_1,\cdots, a_n)$ such that $T(a_1,\cdots,a_n) \neq 0$. Then $\frac{d}{T(a_1,\cdots,a_n)}(a_1,\cdots,a_n)$ satisfies the requirement.
\end{proof}

\begin{corollary}
\label{CorT^(-1)(d)ContainsFrameTArbitraryLinearFormCondDimL2X}
Let $n \in \mathbb{N}^*$ and $T : L^2(X,\mu;\mathbb{F}^n) \to \mathbb{F}$ be a non-zero linear form. \\
Suppose that $\dim(L^2(X,\mu;\mathbb{F})) \geq n$. \\
Then for all $d \neq 0$, there exists a continuous frame $\Phi = (\varphi_x)_{x \in X}$ such that $T(\Phi)=d$.
\end{corollary}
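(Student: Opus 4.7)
The plan is to reduce the corollary to the vector-space statement in Proposition \ref{PropT^(-1)(d)ContainsIndependentNTuplesTArbitraryLinearFormCondDimV} by transporting everything across the \textbf{Transpose} isometry used in Proposition \ref{PropStiefelIsometricFrames}. Set $V := L^2(X,\mu;\mathbb{F})$, which by hypothesis satisfies $\dim(V) \geq n$. The map \textbf{Transpose} is a linear isometry between $L^2(X,\mu;\mathbb{F}^n)$ and $V^n$ which identifies the Stiefel manifold $St(n,V)$ (i.e.\ $n$-tuples of free elements of $V$) with $\mathcal{F}_{(X,\mu),n}^{\mathbb{F}}$; equivalently, Proposition \ref{PropCharFrames} tells us that $\Phi$ is a continuous frame if and only if the $n$-tuple $(\Phi^1,\dots,\Phi^n) \in V^n$ is free.

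Next I would transport $T$ along this isomorphism: define $\widetilde{T} : V^n \to \mathbb{F}$ by $\widetilde{T} := T \circ \textbf{Transpose}^{-1}$. Since \textbf{Transpose} is a linear bijection and $T$ is a non-zero linear form, $\widetilde{T}$ is again a non-zero linear form on $V^n$. Proposition \ref{PropT^(-1)(d)ContainsIndependentNTuplesTArbitraryLinearFormCondDimV} applied to $\widetilde{T}$ with the vector space $V$ of dimension at least $n$ then yields a free $n$-tuple $(a_1,\dots,a_n) \in V^n$ with $\widetilde{T}(a_1,\dots,a_n) = d$.

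Finally, let $\Phi := \textbf{Transpose}^{-1}(a_1,\dots,a_n) \in L^2(X,\mu;\mathbb{F}^n)$. Then $\Phi^k = a_k$ for each $k$, so $(\Phi^1,\dots,\Phi^n)$ is free, and Proposition \ref{PropCharFrames} immediately gives that $\Phi$ is a continuous frame. By construction, $T(\Phi) = \widetilde{T}(a_1,\dots,a_n) = d$, which closes the argument.

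There is essentially no obstacle here; the only thing to verify is the routine fact that \textbf{Transpose} preserves linearity so that ``non-zero linear form'' is carried back and forth faithfully, which is immediate. The work has already been done in Proposition \ref{PropT^(-1)(d)ContainsIndependentNTuplesTArbitraryLinearFormCondDimV} and its supporting lemma \ref{LemmaAnyNTupleIsLinearCombinationOfFreeeSystems}; the corollary is just the specialization of that abstract result to $V = L^2(X,\mu;\mathbb{F})$ combined with the characterization of continuous frames as free $n$-tuples.
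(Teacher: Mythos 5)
Your proof is correct and follows exactly the route the paper intends: the corollary is stated without proof precisely because it is the specialization of Proposition \ref{PropT^(-1)(d)ContainsIndependentNTuplesTArbitraryLinearFormCondDimV} to $V = L^2(X,\mu;\mathbb{F})$ via the \textbf{Transpose} identification and the characterization of continuous frames in Proposition \ref{PropCharFrames}. Your write-up just makes explicit the transport of $T$ and of the freeness condition that the paper leaves implicit.
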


\begin{proposition}
\phantomsection
\label{PropT^(-1)(d)ContainsFrameEachCordTDependsSeparatelyOnOneVarCondDimL2X}
Let $n \in \mathbb{N}^*$, $V$ be a vector space over $\mathbb{F}$, and $S : V \to \mathbb{F}$ be a non-zero linear form. \\
Define the linear operator 
\[ T : \begin{cases} V^n &\to \mathbb{F}^n \\ (x_1,\cdots,x_n) &\mapsto (S(x_1), \cdots, S(x_n)) \end{cases}. \]
For all $d \in \mathbb{F}^n$, suppose that
\begin{itemize}
\item if $d \neq 0$, then $\dim(V) \geq n$,
\item if $d = 0$ then $\dim(V) \geq n+1$.
\end{itemize}
Then $T^{-1}(\{d\})$ contains an independent n-tuple $(a_1,\cdots,a_n)$.
\end{proposition}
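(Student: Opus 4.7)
The plan is to split into cases on whether $d = 0$ or $d \neq 0$, and in both cases to exploit the codimension-one kernel $\ker(S)$ of the nonzero form $S$. (I interpret $S$ as a linear form $S : V \to \mathbb{F}$, since the formula $T(x_1,\ldots,x_n) = (S(x_1),\ldots,S(x_n))$ forces that reading; the ``$S : V^n \to \mathbb{F}$'' in the statement appears to be a typo.)

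\textbf{Case $d = 0$.} The condition $T(a_1,\ldots,a_n) = 0$ is equivalent to $a_i \in \ker(S)$ for every $i$, so the task reduces to finding an independent $n$-tuple inside $\ker(S)$. Since $S \neq 0$, we have $\dim(\ker(S)) = \dim(V) - 1 \geq n$ by the hypothesis $\dim(V) \geq n+1$, which is exactly what is needed.

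\textbf{Case $d \neq 0$.} The strategy is to pick a vector $v_0 \in V$ with $S(v_0) = 1$ (possible because $S$ is nonzero and can be rescaled on any vector outside $\ker(S)$), which yields the direct sum decomposition $V = \mathbb{F} v_0 \oplus \ker(S)$, and then to build the $a_i$'s with a prescribed component along $v_0$. After permuting indices I may assume $d_n \neq 0$. Using $\dim(\ker(S)) = \dim(V) - 1 \geq n-1$, I select a linearly independent family $(k_1,\ldots,k_{n-1})$ in $\ker(S)$ and define
\[
 a_i := d_i v_0 + k_i \quad (i \in [1,n-1]), \qquad a_n := d_n v_0.
\]
Applying $S$ coordinatewise gives $T(a_1,\ldots,a_n) = d$ automatically.

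The only step requiring verification is the independence of $(a_1,\ldots,a_n)$. Given a relation $\sum_{i=1}^n c_i a_i = 0$, the uniqueness of the decomposition along $\mathbb{F} v_0 \oplus \ker(S)$ splits this into $\bigl(\sum_{i=1}^n c_i d_i\bigr) v_0 = 0$ and $\sum_{i=1}^{n-1} c_i k_i = 0$; independence of the $k_i$'s then forces $c_1 = \cdots = c_{n-1} = 0$, and $c_n d_n = 0$ combined with $d_n \neq 0$ yields $c_n = 0$. No step presents a real obstacle; the only subtlety is that prescribing $S$-values $d_i$ and simultaneously guaranteeing linear independence would fail if one naively took $a_i = d_i v_0$ (all proportional), so the key design choice is to leave $a_n$ purely along $v_0$ while enriching $a_1,\ldots,a_{n-1}$ with $n-1$ independent kernel directions — which is precisely what the dimension bound $\dim(V) \geq n$ permits.
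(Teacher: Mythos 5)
Your proof is correct and follows essentially the same strategy as the paper: decompose $V = \mathbb{F}v_0 \oplus \ker(S)$ with $S(v_0)=1$, prescribe the $v_0$-component of each $a_i$ to be $d_i$, and attach independent kernel vectors to force linear independence. Your construction is in fact the special case of the paper's matrix construction $\mathbf{A}=\mathbf{D}\mathbf{H}$ in which $\mathbf{D}$ is taken (after permuting so that $d_n\neq 0$) to be $[\,d \mid e_1 \mid \cdots \mid e_{n-1}\,]$, and your independence check via the direct-sum decomposition is cleaner than the paper's dual-basis argument.
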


\begin{proof}
\begin{itemize}
\item Suppose that $d \neq 0$ and $\dim(V) \geq n$. Let $h \in V$ such that $S(h) = 1$, $(h_{(2)}, \cdots, h_{(n)})$ be an independent system in $Ker(S)$ (this is possible since $\codim(Ker(S)) = 1$), and $(d, d_{(2)},\cdots,d_{(n)})$ be an independent system in $\mathbb{F}^n$. Consider the $V$-valued column matrix $\textbf{H} = \begin{bmatrix} h \\ h_{(2)} \\ \vdots \\ h_{(n)} \end{bmatrix}$ and the $\mathbb{F}$-valued square matrix $\textbf{D} = \begin{bmatrix} [c|ccc] d^1 & d_{(2)}^1 & \cdots & d_{(n)}^1  \\ \vdots & \vdots & \vdots & \vdots \\  d^n & d_{(2)}^n & \cdots & d_{(n)}^n  \end{bmatrix}$. Moreover, we set $\mathbf{A} = \begin{bmatrix} a_1 \\ \vdots \\ a_n \end{bmatrix} = \mathbf{D} \mathbf{H}$. \\
We have $T(a_1,\cdots,a_n) = (S(a_1), \cdots, S(a_n)) = (S(d^i h + \sum_{k=2}^n d_{(k)}^i h_{(k)}))_{i \in [\![1,n]\!]} = (d^i)_{i \in [\![1,n]\!]} = d$ since $h_{(2)}, \cdots, h_{(n)}$ belong to $Ker(S)$. \\
Let's show that $(a_1,\cdots,a_n)$ is free. Let $\lambda = (\lambda^1,\cdots,\lambda^n) \in \mathbb{F}^n$ such that $\sum_{i=1}^n  \lambda^i a_i = 0$. Consider $\mathbf{\Lambda} = \begin{bmatrix} \lambda^1 \\ \vdots \\ \lambda^n \end{bmatrix}$. Therefore we have $0 = \mathbf{\Lambda}^\top \mathbf{A} = \mathbf{\Lambda}^\top \mathbf{D} \mathbf{H} \in V$. At this point, we can complete $(h,h_{(2)}, \cdots, h_{(n)})$ into a Hamel basis of $V$, and denote by $(h^*,h_{(2)}^*, \cdots, h_{(n)}^*)$ the first $n$ linear forms of its dual basis. Applying these linear forms to $\mathbf{\Lambda}^\top \mathbf{D} \mathbf{H}$, we see that $\mathbf{\Lambda}^\top \mathbf{D} = \begin{bmatrix} [c|c|c] 0 & \cdots & 0 \end{bmatrix}$, therefore $\lambda = 0$ as $\mathbf{D}^\top$ is obviously invertible since $(h, d_{(2)},\cdots,d_{(n)})$ is an independent system in $\mathbb{F}^n$.
\item Suppose that $d = 0$ and $\dim(V) \geq n+1$. Let $h \in V$ such that $S(h) = 1$ and $(a_1, \cdots,a_n)$ be an independent system in $Ker(S)$. \\
We have $T(a_1,\cdots,a_n) = (S(a_1),\cdots,S(a_n)) = 0 \in \mathbb{F}^n$. \\
Moreover, we have by construction that $(a_1,\cdots,a_n)$ is free.
\end{itemize}
\end{proof}

\begin{remark}
In the previous proposition, if $d \neq 0$, the condition $\dim(V) \geq n$ is necessary for the existence of an independent n-tuple because the existence of an independent n-tuple implies that $\dim(V) \geq n$, and if $d=0$, the condition $\dim(V) \geq n+1$ is also necessary for the existence of an independent n-tuple in $T^{-1}(\{0\}$ because the existence of an independent n-tuple $(a_1,\cdots,a_n)$ in $T^{-1}(\{0\}$ implies that $S(a_i) = 0$ for all $i \in [\![1,n]\!]$, and since there exists $h \in V$ such that $S(h)=1$ because $S$ is non-zero, then $(a_1,\cdots,a_n,h)$ is free which implies $\dim(V) \geq n+1$.
\end{remark}

\begin{corollary}
\phantomsection
\label{CorT^(-1)(d)ContainsFrameEachCordTDependsSeparatelyOnOneVarCondDimL2X}
Let $h \in L^2(X,\mu;\mathbb{F}) \neq 0$. Define the linear operator 
\[ T : \begin{cases} L^2(X,\mu;\mathbb{F}^n) &\to \mathbb{F}^n \\ F = (f_x)_{x \in X} &\mapsto \int_X h(x)f_x d\mu(x) \end{cases}. \]
For all $d \in \mathbb{F}^n$, suppose that
\begin{itemize}
\item if $d \neq 0$, then $\dim(L^2(X,\mu;\mathbb{F})) \geq n$,
\item if $d = 0$ then $\dim(L^2(X,\mu;\mathbb{F})) \geq n+1$.
\end{itemize}
Then $T^{-1}(\{d\})$ contains a continuous frame $\Phi = (\varphi_x)_{x \in X}$.
\end{corollary}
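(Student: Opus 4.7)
The plan is to reduce the corollary to proposition \ref{PropT^(-1)(d)ContainsFrameEachCordTDependsSeparatelyOnOneVarCondDimL2X} applied to the vector space $V = L^2(X,\mu;\mathbb{F})$, using the \textbf{Transpose} isometry introduced in the proof of proposition \ref{PropStiefelIsometricFrames} to translate between $L^2(X,\mu;\mathbb{F}^n)$ and $L^2(X,\mu;\mathbb{F})^n$.

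First I would define the linear form $S : L^2(X,\mu;\mathbb{F}) \to \mathbb{F}$ by $S(g) = \int_X h(x) g(x) d\mu(x)$ and verify that $S$ is non-zero. This is immediate: since $h \neq 0$ in $L^2(X,\mu;\mathbb{F})$, we have $S(\overline{h}) = \int_X |h(x)|^2 d\mu(x) = \lVert h \rVert^2 > 0$. Next I would check the compatibility between $T$ and the operator constructed from $S$ in proposition \ref{PropT^(-1)(d)ContainsFrameEachCordTDependsSeparatelyOnOneVarCondDimL2X}: for any $F = (f_x)_{x \in X} \in L^2(X,\mu;\mathbb{F}^n)$ with Transpose image $(F^1, \cdots, F^n)$, the $k$-th coordinate of $T(F)$ is $\int_X h(x) f_x^k d\mu(x) = S(F^k)$, so under the Transpose identification $T$ becomes exactly $(F^1,\cdots,F^n) \mapsto (S(F^1),\cdots,S(F^n))$.

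Now I would apply proposition \ref{PropT^(-1)(d)ContainsFrameEachCordTDependsSeparatelyOnOneVarCondDimL2X} with $V = L^2(X,\mu;\mathbb{F})$ and the linear form $S$: the dimension hypothesis of the corollary translates directly into the dimension hypothesis of the proposition (for $d \neq 0$ we need $\dim(V) \geq n$, for $d=0$ we need $\dim(V) \geq n+1$). This yields an independent $n$-tuple $(\Phi^1,\cdots,\Phi^n) \in L^2(X,\mu;\mathbb{F})^n$ satisfying $S(\Phi^k) = d^k$ for all $k \in [1,n]$. Pulling this back through the inverse Transpose gives a unique $\Phi = (\varphi_x)_{x \in X} \in L^2(X,\mu;\mathbb{F}^n)$ whose coordinate families are exactly $\Phi^1, \cdots, \Phi^n$, and by the compatibility check above, $T(\Phi) = d$.

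Finally I would invoke proposition \ref{PropCharFrames} to conclude: since $\Phi \in L^2(X,\mu;\mathbb{F}^n)$ and $(\Phi^1,\cdots,\Phi^n)$ is free, $\Phi$ is a continuous frame in $\mathbb{F}^n$. The argument is essentially a formal transport along an isometry, so there is no real obstacle; the only thing to be a little careful about is not confusing the two roles of the superscript $k$, namely as a coordinate index in $\mathbb{F}^n$ and as the name of the $k$-th coordinate family in $L^2(X,\mu;\mathbb{F})^n$, both of which are consistent with the notation fixed in section \ref{SubsectionNotation}.
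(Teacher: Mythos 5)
Your proposal is correct and matches the route the paper intends: the corollary is stated without proof precisely because it is the specialization of proposition \ref{PropT^(-1)(d)ContainsFrameEachCordTDependsSeparatelyOnOneVarCondDimL2X} to $V = L^2(X,\mu;\mathbb{F})$ with $S(g)=\int_X h(x)g(x)\,d\mu(x)$, transported through the \textbf{Transpose} isometry and converted to a frame via proposition \ref{PropCharFrames}. Your write-up supplies exactly these steps (including the verification that $S$ is non-zero via $S(\overline{h})=\lVert h\rVert^2>0$, and the tacit correction of the proposition's typo $S:V^n\to\mathbb{F}$ to $S:V\to\mathbb{F}$), so there is nothing to add.
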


\begin{proposition}
\phantomsection
\label{PropT^(-1)(d)ContainsFrame}
Let $h \in L^2(X,\mu;\mathbb{F}) \neq 0$. Define the linear operator 
\[ T : \begin{cases} L^2(X,\mu;\mathbb{F}^n) &\to \mathbb{F}^n \\ F = (f_x)_{x \in X} &\mapsto \int_X h(x)f_x d\mu(x) \end{cases}. \]
If there exists a measurable subset $Y \subset X$ such that $\dim(L^2(Y,\mu;\mathbb{F})) \geq n$ and $\mu((X \setminus Y)\cap h^{-1}(\mathbb{F}^*)) >0$, then for all $d \in \mathbb{F}^n$, $T^{-1}(\{d\})$ contains a continuous frame $\Phi = (\varphi_x)_{x \in X}$.
\end{proposition}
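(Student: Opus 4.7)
The plan is to build $\Phi$ as the sum of two families with essentially disjoint supports. One part, supported on $Y$, supplies the linear independence of the components via the dimension hypothesis $\dim L^2(Y,\mu;\mathbb{F})\geq n$, while a single scalar function $g$, supported on a subset of $X\setminus Y$ where $h$ is bounded away from zero, can be scaled independently in each coordinate to steer $T(\Phi)$ to any prescribed $d\in\mathbb{F}^n$. This separates the roles of independence and attainability across the two halves of $X$, in the spirit of Proposition \ref{PropT^(-1)(d)ContainsFrameEachCordTDependsSeparatelyOnOneVarCondDimL2X} but with a more delicate measure-theoretic construction on the $X\setminus Y$ side.

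\textbf{The auxiliary function $g$.} To construct $g$, I would set $A := (X\setminus Y)\cap h^{-1}(\mathbb{F}^*)$ and write $A = \bigcup_{m\geq 1} A_m$ with $A_m := A \cap \{|h|\geq 1/m\}$. Since $\mu(A) > 0$, some $A_m$ has positive measure, and because $h\in L^2$ one automatically gets $\mu(A_m)\leq m^2\|h\|_2^2 < \infty$. Fixing such a $B := A_m$, define $g \in L^2(X,\mu;\mathbb{F})$ by $g(x) := \overline{h(x)}/|h(x)|$ on $B$ and $g(x) := 0$ elsewhere. Then $g$ is bounded and supported on a finite-measure set, hence in $L^2$, and
\[ c := \int_X h\, g\, d\mu = \int_B |h|\, d\mu \geq \mu(B)/m > 0. \]

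\textbf{Assembly and verification.} Using the dimension hypothesis, I would pick linearly independent $\psi_1,\dots,\psi_n\in L^2(Y,\mu;\mathbb{F})$ and extend each by zero off $Y$ to obtain $\tilde\psi_k \in L^2(X,\mu;\mathbb{F})$, which remain linearly independent. Then, for each $k\in[1,n]$, define
\[ \Phi^k := \tilde\psi_k + \alpha_k\, g, \qquad \alpha_k := \frac{d^k - \int_X h\,\tilde\psi_k\, d\mu}{c}, \]
and set $\Phi := (\varphi_x)_{x\in X}$ with $\varphi_x^k := \Phi^k(x)$. Membership $\Phi\in L^2(X,\mu;\mathbb{F}^n)$ is immediate from $\Phi^k\in L^2(X,\mu;\mathbb{F})$; the $k$-th coordinate of $T(\Phi)$ equals $\int h\,\tilde\psi_k\,d\mu + \alpha_k c = d^k$, so $T(\Phi)=d$; and linear independence of $(\Phi^1,\dots,\Phi^n)$ follows because any relation $\sum_k \lambda_k\Phi^k = 0$ restricts on $Y$ (where $g$ vanishes) to $\sum_k\lambda_k\psi_k = 0$, forcing every $\lambda_k=0$. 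Proposition \ref{PropCharFrames} then upgrades $\Phi$ to a continuous frame in $T^{-1}(\{d\})$.

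\textbf{Main obstacle.} The only delicate point is the construction of $g$: it must be in $L^2$, supported in $X\setminus Y$, and satisfy $\int hg\,d\mu\neq 0$, without any a priori $\sigma$-finiteness of $(X,\mu)$. This is bypassed by the observation that $h\in L^2$ by itself forces each level set $\{|h|\geq 1/m\}$ to have finite measure, so the bounded function $g = \overline{h/|h|}\cdot \mathbf{1}_B$ is automatically in $L^2$ and pairs with $h$ to the strictly positive value $\int_B|h|\,d\mu$.
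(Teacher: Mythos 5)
Your proposal is correct and follows essentially the same route as the paper: extend an independent system on $Y$ (equivalently, a frame indexed by $Y$, via Proposition \ref{PropCharFrames}) by a correction supported in $X\setminus Y$ that is a scalar profile times coordinates chosen to steer $T(\Phi)$ to $d$. The only difference is your choice of profile --- the paper simply takes $g=\overline{h}\,\mathbf{1}_{X\setminus Y}/\lVert h\rVert_{L^2(X\setminus Y,\mu;\mathbb{F})}^2$, which already lies in $L^2$ and pairs with $h$ to $1$, so your level-set truncation to a finite-measure set is a harmless but unnecessary extra step.
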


\begin{proof}
Since $\dim(L^2(Y,\mu;\mathbb{F})) \geq n$, there exists a continuous frame $(\varphi_y)_{y \in Y} \in \mathcal{F}_{(Y,\mu),n}^\mathbb{F}$. We extend $(\varphi_y)_{y \in Y}$ by setting 
\[ \varphi_x := \frac{\overline{h(x)}}{\lVert h \rVert_{L^2(X \setminus Y,\mu;\mathbb{F})}^2}\left(d - \int_Y h(y)\varphi_y d\mu(y) \right) \qquad \text{for all } x \in X \setminus Y \]
Let $\Phi = (\varphi_x)_{x \in X}$. We have 
\begin{align*}
 T(\Phi)  &= \int_X h(x)\varphi_x d\mu(x) \\
&= \int_Y h(y)\varphi_y d\mu(y) + \left( \int_{X \setminus Y} h(x) \frac{\overline{h(x)}}{\lVert h \rVert_{L^2(X\setminus Y,\mu;\mathbb{F})}^2} d\mu(x) \right) (d -  \int_Y h(y)\varphi_y d\mu(y)) \\
&= d.
\end{align*}
Moreover, $\Phi \in \mathcal{F}_{(X,\Sigma,\mu),n}^\mathbb{F}$ since we have only completed $(\varphi_y)_{y \in Y}$ by a function in $L^2(X \setminus Y,\mu;\mathbb{F}^n)$.
\end{proof}

\begin{proposition}
\label{PropW^(-1)(D)ContainsFrame}
Let $(X,\Sigma,\mu)$ be a measure space, $l \in \mathbb{N}^*$, $(X_j)_{j \in [\![1,l]\!]}$ a partition of $X$ by measurable subsets, and $h \in L^2(X,\mu;\mathbb{F})$ such that there exist a family $(Y_j)_{j \in [\![1,l]\!]}$ with $Y_j$ a measurable subset of $X_j$ for all $j \in [\![1,l]\!]$, $\mu((X_j \setminus Y_j) \cap h^{-1}(\mathbb{F}^*)) > 0$ for all $j \in [\![1,l]\!]$, and $\sum\limits_{j=1}^l \dim(L^2(Y_j,\mu;\mathbb{F})) \geq n$. \\
Define the operator 
\[ W : \begin{cases} L^2(X,\mu;\mathbb{F}^n) &\to \prod_{j \in [\![1,l]\!]} \mathbb{F}^n \\ F = (f_x)_{x \in X} &\mapsto (\int_{X_j} h(x)f_x d\mu(x))_{j \in [\![1,l]\!]} \end{cases}. \]
Then for all $D = (d_j)_{j \in [\![1,l]\!]} \in \prod_{j \in [\![1,l]\!]} \mathbb{F}^n$, $W^{-1}(\{D\})$ contains at least one continuous frame $\Phi \in \mathcal{F}_{(X,\Sigma,\mu),n}^\mathbb{F}$.
\end{proposition}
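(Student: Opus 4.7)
The plan is to generalize the construction from Proposition \ref{PropT^(-1)(d)ContainsFrame} by distributing the $n$ ``frame degrees of freedom'' across all pieces $Y_j$ simultaneously (using the direct-sum decomposition of $L^2$ over a disjoint union), and then using the correction trick on each $X_j \setminus Y_j$ independently to adjust the $j$-th integral to $d_j$.

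First, I would observe that since each $Y_j \subseteq X_j$ and the $X_j$ are pairwise disjoint, the $Y_j$ are themselves pairwise disjoint, yielding the natural isomorphism $L^2\bigl(\bigcup_{j=1}^l Y_j, \mu; \mathbb{F}\bigr) \cong \bigoplus_{j=1}^l L^2(Y_j, \mu; \mathbb{F})$. In particular, $\dim L^2\bigl(\bigcup_{j=1}^l Y_j, \mu; \mathbb{F}\bigr) = \sum_{j=1}^l \dim L^2(Y_j, \mu; \mathbb{F}) \geq n$, so I can select $n$ linearly independent functions $\eta^1, \ldots, \eta^n$ in $L^2\bigl(\bigcup_{j=1}^l Y_j, \mu; \mathbb{F}\bigr)$.

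Next, I would define $\Phi = (\varphi_x)_{x \in X}$ piecewise. On each $Y_j$, I set $\varphi_x^k := \eta^k(x)$ for all $k \in [1,n]$. On each $X_j \setminus Y_j$, I mimic the formula used in Proposition \ref{PropT^(-1)(d)ContainsFrame}, setting
\[
\varphi_x := \frac{\overline{h(x)}}{\lVert h \rVert_{L^2(X_j \setminus Y_j, \mu; \mathbb{F})}^2}\left(d_j - \int_{Y_j} h(y)\varphi_y \, d\mu(y)\right).
\]
The hypothesis $\mu((X_j \setminus Y_j) \cap h^{-1}(\mathbb{F}^*)) > 0$ guarantees that $\lVert h \rVert_{L^2(X_j \setminus Y_j, \mu; \mathbb{F})} > 0$, so the formula is well-defined for each $j$. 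Then I would verify the two required properties. For $W(\Phi) = D$: splitting the integral on each $X_j$ along $Y_j \sqcup (X_j \setminus Y_j)$ and using $\int_{X_j \setminus Y_j} h\overline{h}\, d\mu = \lVert h \rVert_{L^2(X_j \setminus Y_j, \mu; \mathbb{F})}^2$, the correction piece on $X_j \setminus Y_j$ contributes exactly $d_j - \int_{Y_j} h\varphi\, d\mu$, yielding $\int_{X_j} h\varphi\, d\mu = d_j$, which is the identical calculation to the one in Proposition \ref{PropT^(-1)(d)ContainsFrame}. For $\Phi \in \mathcal{F}_{(X,\mu),n}^\mathbb{F}$: by Proposition \ref{PropCharFrames} it suffices to check that $\Phi \in L^2(X,\mu;\mathbb{F}^n)$ (which holds since on $\bigcup_j Y_j$ it is built from the $L^2$ functions $\eta^k$ and on each $X_j \setminus Y_j$ it is a constant-vector multiple of the $L^2$ function $\overline{h}$) and that $(\Phi^1, \ldots, \Phi^n)$ is linearly independent in $L^2(X,\mu;\mathbb{F})$; for the latter, any relation $\sum_{k=1}^n c_k \Phi^k = 0$ restricted to $\bigcup_j Y_j$ gives $\sum_{k=1}^n c_k \eta^k = 0$, forcing $c_1 = \cdots = c_n = 0$ by the linear independence of the $\eta^k$.

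I do not foresee any serious obstacle. The only subtle point is that the linearly independent family witnessing freeness must be chosen \emph{globally} across the pieces $Y_j$, which is precisely what the direct-sum decomposition delivers and what justifies stating the hypothesis as $\sum_{j} \dim L^2(Y_j,\mu;\mathbb{F}) \geq n$ rather than a dimension condition on each $Y_j$ separately.
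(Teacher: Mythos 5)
Your construction is correct and follows the same two-step skeleton as the paper's proof: build the ``frame part'' on the $Y_j$'s using the dimension hypothesis, then adjust the $j$-th integral to $d_j$ by adding a constant-vector multiple of $\overline{h}$ on $X_j \setminus Y_j$ (your correction formula and the computation showing $W(\Phi)=D$ are the same as the paper's). Where you genuinely diverge is in the two verifications. For the frame part, the paper selects the indices $j_u$ with $\dim L^2(Y_{j_u},\mu;\mathbb{F}) \geq 1$, partitions the standard basis vectors $e_1,\dots,e_n$ among them, and attaches an orthonormal scalar family to each block; you instead take $n$ linearly independent scalar functions globally in $L^2(\bigcup_j Y_j,\mu;\mathbb{F}) \cong \bigoplus_j L^2(Y_j,\mu;\mathbb{F})$ and use them directly as the coordinate families $\Phi^1,\dots,\Phi^n$. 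Correspondingly, the paper verifies the frame inequality by hand (orthonormality yields the explicit lower bound $\lVert v\rVert^2 \leq \int_X |\langle v,\varphi_x\rangle|^2\,d\mu(x)$ together with an explicit upper bound), while you invoke the characterization of Proposition \ref{PropCharFrames} ($L^2$ membership plus linear independence of the coordinate families), exactly as the paper itself does in the single-piece Proposition \ref{PropT^(-1)(d)ContainsFrame}. Your route avoids the combinatorial bookkeeping of the partition $P_1,\dots,P_r$ and is slightly cleaner; the paper's route produces explicit frame bounds as a byproduct. Both arguments are valid, and your closing remark correctly identifies why the hypothesis is a condition on the sum of the dimensions rather than on each piece separately.
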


\begin{remark}
Proposition \ref{PropT^(-1)(d)ContainsFrame} results from proposition \ref{PropW^(-1)(D)ContainsFrame} by taking $l = 1$.
\end{remark}

\begin{remark}
Proposition \ref{PropW^(-1)(D)ContainsFrame} can be generalized to $l = +\infty$ or to partitions indexed by a general index set $J$ if we restrict to $D=0$ (due to convergence issues).
\end{remark}

\begin{proof}
For each $i \in [\![1,n]\!]$, let $e_i$ be the i-th vector of the standard basis of $\mathbb{F}^n$. Since $\sum\limits_{j=1}^l \dim(L^2(Y_j,\mu;\mathbb{F})) \geq n$, we can find distinct $j_1, \cdots, j_r \in [\![1,l]\!]$ such that for each $u \in [\![1,r]\!]$, $\dim(L^2(Y_{j_u},\mu;\mathbb{F})) \geq 1$ and $\sum_{u=1}^r \dim(L^2(Y_{j_u},\mu;\mathbb{F})) \geq n$. Take a partition $P_1, \cdots, P_r$ of $\{e_1, \cdots, e_n\}$ with $|P_u| \leq \dim(L^2(Y_{j_u},\mu;\mathbb{F}))$ for all $u \in [\![1,r]\!]$. \\
For all $u \in [\![1,r]\!]$, let $(g_u^p)_{p \in P_u}$ be an orthonormal family in $L^2(Y_{j_u},\mu;\mathbb{F})$ and define $(\varphi_x)_{x \in X_{j_u}}$ by 
\[ \varphi_y = \sum_{p \in P_u} g_u^p(y)p \qquad \text{for all y } \in Y_{j_u} \]
and 
\[ \varphi_x := \frac{\overline{h(x)}}{\lVert h \rVert_{L^2(X_{j_u} \setminus Y_{j_u},\mu;\mathbb{F})}^2}\left(d_{j_u} - \int_Y h(y)\varphi_y d\mu(y) \right) \qquad \text{for all } x \in X_{j_u} \setminus Y_{j_u}. \]
For all $j \notin \{j_u : u \in [\![1,r]\!]\}$, define $(\varphi_x)_{x \in X_j}$ by 
\[ \varphi_y = 0 \qquad \text{for all y } \in Y_j \]
and 
\[ \varphi_x := \frac{\overline{h(x)}}{\lVert h \rVert_{L^2(X_j \setminus Y_j,\mu;\mathbb{F})}^2}d_j \qquad \text{for all } x \in X_j \setminus Y_j. \]
Let $\Phi = (\varphi_x)_{x \in X}$. We have 
\begin{align*}
W(\Phi) &= \left(\int_{X_j} h(x)\varphi_x d\mu(x) \right)_{j \in [\![1,l]\!]} \\
&= \Bigg( \int_{Y_j} h(y)\varphi_y d\mu(y) \\
&\quad + \left( \int_{X_j \setminus Y_j} h(x) \frac{\overline{h(x)}}{\lVert h \rVert_{L^2(X_j \setminus Y_j,\mu;\mathbb{F})}^2} d\mu(x) \right) \left( d_j -  \int_{Y_j} h(y)\varphi_y d\mu(y) \right) \Bigg)_{j \in [\![1,l]\!]} \\
&= (d_j)_{j \in [\![1,l]\!]} = D.
\end{align*}
Moreover, $\Phi \in \mathcal{F}_{(X,\Sigma,\mu),n}^\mathbb{F}$ since 
\[ \forall v \in \mathbb{F}^n : \lVert v \rVert^2 = \sum_{u=1}^r \int_{Y_{j_u}} | \langle v  , \varphi_x \rangle |^2 d\mu(x) \leq \int_X | \langle v , \varphi_x \rangle |^2 d\mu(x) \]
and 
\begin{align*}
\int_X | \langle v , \varphi_x \rangle |^2 d\mu(x) &\leq \lVert v \rVert^2 + \left( \sum_{u=1}^r \frac{\lVert d_{j_u} - \int_Y h(y)\varphi_y d\mu(y) \rVert^2}{\lVert h \rVert_{L^2(X_{j_u} \setminus Y_{j_u},\mu;\mathbb{F})}^2} \right) \lVert v \rVert^2 \\
&\quad + \left( \sum_{j \notin \{j_u : u\in [\![1,r]\!]\}} \frac{\lVert d_j \rVert^2}{\lVert h \rVert_{L^2(X_j \setminus Y_j,\mu;\mathbb{F})}^2} \right) \lVert v \rVert^2 
\end{align*}
\end{proof}

\subsection{A quadratic equation}

\begin{proposition}
\label{Propq^(-1)(d)ContainsFrame}
Let $(X,\Sigma,\mu)$ be a $\sigma$-finite measure space, $b \in \mathbb{C}^n \setminus \{0\}$, $\epsilon > 0$ and $h \in L^\infty(X,\mu;\mathbb{C})$. \\
Consider 
\[ q : \begin{cases} L^2(X,\mu;\mathbb{C}^n) &\to \mathbb{C}^n \\ F = (f_x)_{x \in X} &\mapsto \int_X h(x) \langle b , f_x \rangle f_x d\mu(x) \end{cases}. \]
Then for all $d \in \mathbb{C}^n$ such that 
\begin{itemize} 
\item if $d \neq 0$, then there exists a measurable subset $Y \subseteq X$, two measurable subsets $B_1 \subseteq \{z \in \mathbb{C} : \operatorname{Re}(\langle b , d \rangle z) > \epsilon \text{ and } \operatorname{Im}(\langle b , d \rangle z) < - \epsilon \}$ and $B_2 \subseteq \{z \in \mathbb{C} : \operatorname{Re}(\langle b , d \rangle z) > \epsilon \text{ and } \operatorname{Im}(\langle b , d \rangle z) > \epsilon\}$ such that $\dim(L^2(Y,\mu;\mathbb{C})) \geq n$ and $\mu((X \setminus Y) \cap h^{-1}(B_1)),\mu((X \setminus Y) \cap h^{-1}(B_2))>0$,
\item if $d = 0$, then there exist a measurable subset $Y \subseteq X$ such that $\dim(L^2(Y,\mu;\mathbb{C})) \geq n$ and $h(x) < 0$ $\mu$-almost everywhere on $Y$, and [(two measurable subsets $B_1 \subseteq \{z \in \mathbb{C} : \operatorname{Re}(z) > 0 \text{ and } \operatorname{Im}(z) < 0 \}$ and $B_2 \subseteq \{z \in \mathbb{C} : \operatorname{Re}(z) > 0 \text{ and } \operatorname{Im}(z) > 0\}$ such that $\mu((X \setminus Y) \cap h^{-1}(B_1)),\mu((X \setminus Y) \cap h^{-1}(B_2))>0$) or (a measurable subset $B_3 \subseteq \{z \in \mathbb{C} : \operatorname{Re}(z) > 0 \text{ and } \operatorname{Im}(z) = 0\}$ such that $\mu((X \setminus Y) \cap h^{-1}(B_3))>0$)],
\end{itemize}
there exists a continuous frame $\Phi = (\varphi_x)_{x \in X} \in q^{-1}(\{d\})$.
\end{proposition}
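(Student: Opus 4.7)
The plan is to extend the splitting argument from Proposition~\ref{PropT^(-1)(d)ContainsFrame} to the present quadratic setting. I would build $\Phi = (\varphi_x)_{x\in X}$ by combining an independent $n$-tuple on $Y$ with a rank-one correction $\varphi_x := \alpha(x) v$ on $X \setminus Y$, for suitable $v \in \mathbb{C}^n$ and $\alpha \in L^2(X \setminus Y, \mu; \mathbb{C})$. The key simplification is that
\[ \int_{X \setminus Y} h(x) \langle b, \varphi_x \rangle \varphi_x \, d\mu(x) = \langle b, v \rangle v \int_{X \setminus Y} h(x) |\alpha(x)|^2 \, d\mu(x), \]
so matching $q(\Phi) = d$ collapses to a scalar equation $\int_{X \setminus Y} h |\alpha|^2 \, d\mu = \zeta$ in $\mathbb{C}$. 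I would solve it by the ansatz $\alpha := s_1 \mathbf{1}_{A_1'} + s_2 \mathbf{1}_{A_2'}$ (or $s_3 \mathbf{1}_{A_3'}$ in the $B_3$ subcase) with $s_j \geq 0$ and $A_j' \subseteq (X \setminus Y) \cap h^{-1}(B_j)$ of finite positive measure (available by $\sigma$-finiteness of $\mu$ and the hypothesis $\mu(A_j) > 0$, pairwise disjoint since the $B_j$ are), reducing the problem to a real $2 \times 2$ linear system in $(s_1^2, s_2^2)$ whose solvability with non-negative coordinates is exactly what the quadrant/$\epsilon$-conditions were designed to ensure.

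For $d \neq 0$, the non-emptiness of $B_1, B_2$ forces $\langle b, d \rangle \neq 0$. I would pick any independent $(\psi_y)_{y \in Y} \in \mathcal{F}_{(Y,\mu),n}^\mathbb{C}$ (available since $\dim L^2(Y,\mu;\mathbb{C}) \geq n$) and set $\varphi_y := c \psi_y$ for $c > 0$ small, so that $d_Y := \int_Y h \langle b, \varphi_y \rangle \varphi_y \, d\mu = O(c^2)$; then $v := d - d_Y \neq 0$ and $\langle b, v \rangle \neq 0$ for $c$ small, so $\zeta := 1/\langle b, v \rangle$ is well-defined. Writing $w_j := \int_{A_j'} h \, d\mu$, the quadrant/$\epsilon$-conditions place $\langle b, d \rangle w_1$ and $\langle b, d \rangle w_2$ strictly inside the open fourth and first quadrants, uniformly bounded away from both axes; hence $w_1, w_2$ are $\mathbb{R}$-linearly independent, and $\langle b, d \rangle \zeta \to 1$ as $c \to 0$ lies strictly in the open cone they generate, so a direct Cramer computation yields $s_1^2, s_2^2 > 0$.

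For $d = 0$ the trick is to arrange $d_Y$ proportional to $b$ so that a $b$-directed correction suffices. Since $h < 0$ a.e.\ on $Y$, for $\delta > 0$ small enough the set $Y' := Y \cap \{-h \geq \delta\}$ still has $\dim L^2(Y',\mu;\mathbb{C}) \geq n$ (by an $L^2$-truncation argument applied to any $n$ independent functions on $Y$), and on it $-h\,d\mu$ defines an $L^2$-norm equivalent to the original one. I would pick an orthonormal family $(g_k)_{k=1}^n$ in $L^2(Y', -h\, d\mu; \mathbb{C})$ and set $\varphi_y := \sum_{k=1}^n g_k(y) e_k$ on $Y'$ and $0$ on $Y \setminus Y'$; a short computation using $\int h \, g_k \overline{g_l} \, d\mu = -\delta_{k,l}$ gives $d_Y = -b$, so choosing $v := b$ turns the scalar equation into $\int h |\alpha|^2 \, d\mu = 1/\|b\|^2 \in \mathbb{R}_{>0}$. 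This is immediate in the $B_3$ subcase ($h$ is real positive on $h^{-1}(B_3)$); in the $B_1, B_2$ subcase the same Cramer argument as above applies, with a positive real right-hand side which sits well inside the open cone generated by $w_1$ (fourth quadrant) and $w_2$ (first quadrant).

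Finally, that $\Phi \in \mathcal{F}_{(X,\mu),n}^\mathbb{C}$ follows because $\Phi \in L^2(X,\mu;\mathbb{C}^n)$ (the $Y$-part by construction, the $X \setminus Y$-part because $\alpha$ is a finite combination of indicators of finite-measure sets), and because any dependence $\sum_k c_k \Phi^k = 0$ restricted to $Y$ becomes a dependence among the $\psi^k$'s (resp.\ among the orthonormal $g_k$'s), forcing $c_k = 0$. The main obstacle I expect is the positivity of $s_1^2, s_2^2$ in the Cramer output; the quadrant/$\epsilon$-placement of $B_1, B_2$ and the positivity of $B_3$ are exactly the hypotheses that pin the target $\zeta$ inside the correct open cone, making the non-negativity of the solution automatic.
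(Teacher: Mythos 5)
Your proposal is correct and follows essentially the same route as the paper's proof: a frame on $Y$ whose contribution $d_Y$ to $q$ is kept small (you scale by $c\to 0$, the paper takes an $a$-tight frame with $a<\epsilon/(\lVert h\rVert_\infty^2\lVert b\rVert^2)$), plus a rank-one correction supported on $(X\setminus Y)\cap h^{-1}(B_1\cup B_2)$ (resp.\ $h^{-1}(B_3)$) in the direction $d-d_Y$ (resp.\ a multiple of $d_Y$), which reduces everything to prescribing the complex number $\int_{X\setminus Y}h\,|\alpha|^2\,d\mu$ with $|\alpha|^2\ge 0$. The only substantive difference is how that scalar equation is solved --- you take $\alpha$ piecewise constant and run a $2\times 2$ Cramer/cone argument, whereas the paper writes down an explicit density proportional to $\sqrt{|\operatorname{Im}\widetilde{h}|}$ on each of $\widetilde{B_1},\widetilde{B_2}$ with a normalizing constant $A$ --- and your $d=0$ normalization forcing $d_Y=-b$ is a harmless but unnecessary extra step, since $h<0$ on $Y$ already gives $\langle b,d_Y\rangle<0$ and one can correct along $v=d_Y$ directly.
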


\begin{proof}
\begin{itemize}
\item Suppose $d \neq 0$. Let $\widetilde{B_1} = (X \setminus Y) \cap h^{-1}(B_1)$ and $\widetilde{B_2} = (X \setminus Y) \cap h^{-1}(B_2)$. There is no loss in generality in assuming that $\mu(\widetilde{B_1})$ and $\mu(\widetilde{B_2})$ are finite since $\mu$ is $\sigma$-finite. Let $0 < a < \frac{\epsilon}{\lVert h \rVert_\infty^2 \lVert b \rVert^2}$. Since $\dim(L^2(Y,\mu;\mathbb{C})) \geq n$, we can pick an $a$-tight frame $(\varphi_y)_{y \in Y} \in \mathcal{F}_{(Y,\mu),n)}^\mathbb{C}$. Let $\widetilde{h}(x) = \left(\langle b , d \rangle - \int_Y h(y) |\langle b , \varphi_y \rangle|^2 d\mu(y)\right)h(x)$ for all $x \in X$. Notice that we have 
\[ \operatorname{Re}(\widetilde{h}(x)) > 0 \qquad \mu-\text{almost everywhere on } \widetilde{B_1}, \] 
\[ \operatorname{Im}(\widetilde{h}(x)) < 0 \qquad \mu-\text{almost everywhere on } \widetilde{B_1}, \]
\[ \operatorname{Re}(\widetilde{h}(x)) > 0 \qquad \mu-\text{almost everywhere on } \widetilde{B_2}, \]  
and 
\[ \operatorname{Im}(\widetilde{h}(x)) > 0 \qquad \mu-\text{almost everywhere on } \widetilde{B_2}. \] 
Let 
\[ A = \frac{1}{\frac{\langle - \operatorname{Im}(\tilde{h}), \operatorname{Re}(\tilde{h}) \rangle_{L^2(\tilde{B_1},\mu;\mathbb{C})}}{\lVert \operatorname{Im}(\tilde{h}) \rVert_{L^2(\tilde{B1},\mu;\mathbb{C})}^2} + \frac{\langle \operatorname{Im}(\tilde{h}), \operatorname{Re}(\tilde{h}) \rangle_{L^2(\tilde{B_2},\mu;\mathbb{C})}}{\lVert \operatorname{Im}(\tilde{h}) \rVert_{L^2(\tilde{B2},\mu;\mathbb{C})}^2}} > 0 \]
and 
\[ g(x) = \sqrt{A} \frac{\sqrt{-\operatorname{Im}(\widetilde{h}(x))}}{\lVert \operatorname{Im}(\widetilde{h}) \rVert_{L^2(\widetilde{B_1},\mu;\mathbb{C})}} 1_{\widetilde{B_1}}(x) + \sqrt{A} \frac{\sqrt{\operatorname{Im}(\widetilde{h}(x))}}{\lVert \operatorname{Im}(\widetilde{h}) \rVert_{L^2(\widetilde{B_2},\mu;\mathbb{C})}} 1_{\widetilde{B_2}}(x) \]
for all  $x \in X \setminus Y$. \\
Then it is easily seen that 
\begin{equation}
\label{eq1}
\left( \int_{X \setminus Y} h(x) |g(x)|^2 d\mu(x) \right) \left( -\langle b , d \rangle + \int_Y h(y) | \langle b , \varphi_y \rangle | ^2 d\mu(y) \right) = -1. 
\end{equation}
Consider $(\varphi_x)_{x \in X \setminus Y}$ defined by $\varphi_x = g(x)\left(-d + \int_Y h(y) \langle b , \varphi_y \rangle \varphi_y d\mu(y) \right)$ for all $x \in X \setminus Y$. Then $\Phi = (\varphi_x)_{x \in X} \in \mathcal{F}_{(X,\Sigma,\mu),n}^\mathbb{C}$ since we have only completed $(\varphi_y)_{y \in Y}$ by a function in $L^2(X \setminus Y,\mu;\mathbb{C}^n)$ . Moreover
\begin{align*}
q(\Phi) &= \int_X h(x) \langle b , \varphi_x \rangle \varphi_x d\mu(x) \\
&= \int_Y h(y) \langle b , \varphi_y \rangle \varphi_y d\mu(y) \\ 
&\quad + \int_{X \setminus Y} h(x) \langle b , g(x)\left(-d + \int_Y h(y) \langle b , \varphi_y \rangle \varphi_y d\mu(y) \right) \rangle \\
&\quad . g(x)\left(-d + \int_Y h(y) \langle b , \varphi_y \rangle \varphi_y d\mu(y) \right) d\mu(x) \\
&= \int_Y h(y) \langle b , \varphi_y \rangle \varphi_y d\mu(y) \\
&\quad - \underbrace{\left[ \int_{X \setminus Y} h(x) |g(x)|^2 \left(-\langle b , d \rangle + \int_Y h(y) |\langle b , \varphi_y \rangle |^2 d\mu(y) \right) d\mu(x) \right]}_{=-1} d \\
&\quad + \underbrace{\left[ \int_{X \setminus Y} h(x) |g(x)|^2 \left(-\langle b , d \rangle + \int_Y h(y) |\langle b , \varphi_y \rangle |^2 d\mu(y) \right) d\mu(x) \right]}_{=-1} \\
&\quad . \left(\int_Y h(y) \langle b , \varphi_y \rangle \varphi_y d\mu(y) \right) \\
&= d
\end{align*}
using equality \ref{eq1}.

\item Suppose $d = 0$. Let $\widetilde{B_1} = (X \setminus Y) \cap h^{-1}(B_1)$ and $\widetilde{B_2} = (X \setminus Y) \cap h^{-1}(B_2)$. \\
Suppose first that $\mu(\widetilde{B_1}),\mu(\widetilde{B_2})>0$. There is no loss in generality in assuming that $\mu(\widetilde{B_1})$ and $\mu(\widetilde{B_2})$ are finite since $\mu$ is $\sigma$-finite.  Since $\dim(L^2(Y,\mu;\mathbb{C})) \geq n$, we can pick a frame $(\varphi_y)_{y \in Y} \in \mathcal{F}_{(Y,\mu),n}^\mathbb{C}$. Let $\widetilde{h}(x) = - \left( \int_Y h(y) |\langle b , \varphi_y \rangle|^2 d\mu(y) \right) h(x)$ for all $x \in X$. Notice that we have 
\[ \operatorname{Re}(\widetilde{h}(x)) > 0 \qquad \mu-\text{almost everywhere on } \widetilde{B_1}, \] 
\[ \operatorname{Im}(\widetilde{h}(x)) < 0 \qquad \mu-\text{almost everywhere on } \widetilde{B_1}, \]
\[ \operatorname{Re}(\widetilde{h}(x)) > 0 \qquad \mu-\text{almost everywhere on } \widetilde{B_2}, \]  
and 
\[ \operatorname{Im}(\widetilde{h}(x)) > 0 \qquad \mu-\text{almost everywhere on } \widetilde{B_2}. \] 
Let 
\[ A = \frac{1}{\frac{\langle - \operatorname{Im}(\tilde{h}), \operatorname{Re}(\tilde{h}) \rangle_{L^2(\tilde{B_1},\mu;\mathbb{C})}}{\lVert \operatorname{Im}(\tilde{h}) \rVert_{L^2(\tilde{B1},\mu;\mathbb{C})}^2} + \frac{\langle \operatorname{Im}(\tilde{h}), \operatorname{Re}(\tilde{h}) \rangle_{L^2(\tilde{B_2},\mu;\mathbb{C})}}{\lVert \operatorname{Im}(\tilde{h}) \rVert_{L^2(\tilde{B2},\mu;\mathbb{C})}^2}} > 0 \]
and 
\[ g(x) = \sqrt{A} \frac{\sqrt{-\operatorname{Im}(\widetilde{h}(x))}}{\lVert \operatorname{Im}(\widetilde{h}) \rVert_{L^2(\widetilde{B_1},\mu;\mathbb{C})}} 1_{\widetilde{B_1}}(x) + \sqrt{A} \frac{\sqrt{\operatorname{Im}(\widetilde{h}(x))}}{\lVert \operatorname{Im}(\widetilde{h}) \rVert_{L^2(\widetilde{B_2},\mu;\mathbb{C})}} 1_{\widetilde{B_2}}(x) \]
for all $x \in X \setminus Y$. \\
Then it is easily seen that 
\begin{equation}
\label{eq2}
\left( \int_{X \setminus Y} h(x) |g(x)|^2  d\mu(x) \right) \left(\int_Y h(y) | \langle b , \varphi_y \rangle | ^2 d\mu(y) \right) = -1. 
\end{equation}
Consider $(\varphi_x)_{x \in X \setminus Y}$ defined by $\varphi_x = g(x)\int_Y h(y) \langle b , \varphi_y \rangle \varphi_y d\mu(y)$ for all $x \in X \setminus Y$. Then $\Phi = (\varphi_x)_{x \in X} \in \mathcal{F}_{(X,\Sigma,\mu),n}^\mathbb{C}$ since we have only completed $(\varphi_y)_{y \in Y}$ by a function in $L^2(X \setminus Y,\mu;\mathbb{C}^n)$ . Moreover
\begin{align*}
q(\Phi) &= \int_X h(x) \langle b , \varphi_x \rangle \varphi_x d\mu(x) \\
&= \int_Y h(y) \langle b , \varphi_y \rangle \varphi_y d\mu(y) \\ 
&\quad + \int_{X \setminus Y} h(x) \langle b , g(x)\left(\int_Y h(y) \langle b , \varphi_y \rangle \varphi_y d\mu(y) \right) \rangle \\
&\quad . g(x)\left(\int_Y h(y) \langle b , \varphi_y \rangle \varphi_y d\mu(y) \right) d\mu(x) \\
&= \int_Y h(y) \langle b , \varphi_y \rangle \varphi_y d\mu(y) \\
&\quad + \underbrace{\left[ \int_{X \setminus Y} h(x) |g(x)|^2 \left(\int_Y h(y) |\langle b , \varphi_y \rangle |^2 d\mu(y) \right) d\mu(x) \right]}_{=-1} \\
&\quad . \left(\int_Y h(y) \langle b , \varphi_y \rangle \varphi_y d\mu(y) \right) \\
&= 0
\end{align*}
using equality \ref{eq2}. \\
Now let  $\widetilde{B_3} = (X \setminus Y) \cap h^{-1}(B_3)$ and suppose instead that $\mu(\widetilde{B_3})>0$. There is no loss in generality in assuming that $\mu(\widetilde{B_3})$ is finite since $\mu$ is $\sigma$-finite.  Since $\dim(L^2(Y,\mu;\mathbb{C})) \geq n$, we can pick a frame $(\varphi_y)_{y \in Y} \in \mathcal{F}_{(Y,\mu),n}^\mathbb{C}$. Let $\widetilde{h}(x) = - \left( \int_Y h(y) |\langle b , \varphi_y \rangle|^2 d\mu(y) \right) h(x)$ for all $x \in X$. Notice that we have 
\[ \widetilde{h}(x) > 0 \qquad \mu-\text{almost everywhere on } \widetilde{B_3}. \] 
Let 
\[ g(x) = \frac{\sqrt{\widetilde{h}(x)}}{\lVert \widetilde{h} \rVert_{L^2(\widetilde{B_3},\mu;\mathbb{C})}} 1_{\widetilde{B_3}}(x) \text{ for all } x \in X \setminus Y \]
Then it is easily seen that 
\begin{equation}
\label{eq3}
\left( \int_{X \setminus Y} h(x) |g(x)|^2  d\mu(x) \right) \left(\int_Y h(y) | \langle b , \varphi_y \rangle | ^2 d\mu(y) \right) = -1. 
\end{equation}
Consider $(\varphi_x)_{x \in X \setminus Y}$ defined by $\varphi_x = g(x)\int_Y h(y) \langle b , \varphi_y \rangle \varphi_y d\mu(y)$ for all $x \in X \setminus Y$. Then $\Phi = (\varphi_x)_{x \in X} \in \mathcal{F}_{(X,\Sigma,\mu),n}^\mathbb{C}$ since we have only completed $(\varphi_y)_{y \in Y}$ by a function in $L^2(X \setminus Y,\mu;\mathbb{C}^n)$ . Moreover we can prove that $q(\Phi)=0$ as before using equality \ref{eq3}.
\end{itemize}
\end{proof}

\section{Examples of subspsaces in which the space of continuous frames is relatively dense}
\label{SectionExamples}

\begin{corollary}
Let $n \in \mathbb{N}^*$ and $T : L^2(X,\mu;\mathbb{F}^n) \to \mathbb{F}$ be a non-zero linear form. \\
Suppose that $\dim(L^2(X,\mu;\mathbb{F})) \geq n$. \\
Then since $T^{-1}(\{d\})$ is affine and by corollaries \ref{CorSt(n,H)CapSDenseInS} and \ref{CorT^(-1)(d)ContainsFrameTArbitraryLinearFormCondDimL2X}, for all $d \neq 0$, $\mathcal{F}_{(X,\Sigma,\mu),n}^\mathbb{F} \cap T^{-1}(\{d\})$ is dense in $T^{-1}(\{d\})$
\end{corollary}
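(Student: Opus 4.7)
The plan is to apply Corollary \ref{CorSt(n,H)CapSDenseInS} to the set $S := T^{-1}(\{d\})$, viewed as a subset of $H^n$ with $H := L^2(X,\mu;\mathbb{F})$ via the isometry \textbf{Transpose} of Proposition \ref{PropStiefelIsometricFrames}, which sends $\mathcal{F}_{(X,\mu),n}^\mathbb{F}$ onto $St(n,H)$. I need to check two things: that $St(n,H) \cap S$ is non-empty, and that every point of $S$ is connected to some point of $St(n,H) \cap S$ by a polynomial path up to reparametrization staying inside $S$.

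For the first point, I will invoke Corollary \ref{CorT^(-1)(d)ContainsFrameTArbitraryLinearFormCondDimL2X}, whose hypotheses (non-zero linear form, $d \neq 0$, and $\dim(L^2(X,\mu;\mathbb{F})) \geq n$) coincide with ours. This produces a continuous frame $\Theta \in T^{-1}(\{d\})$, which under \textbf{Transpose} corresponds to an element of $St(n,H) \cap S$; in particular this intersection is non-empty and provides the base point from which to run paths.

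For the second point, I will exploit the fact that $T^{-1}(\{d\})$ is affine (being the preimage of a singleton by a linear map). Given any $U \in S$, consider the straight-line path $\gamma \colon [0,1] \to H^n$ defined by $\gamma(t) := (1-t)U + t\Theta$. By linearity of $T$, one has $T(\gamma(t)) = (1-t)T(U) + tT(\Theta) = (1-t)d + td = d$, so $\gamma([0,1]) \subseteq S$. Taking $\phi = \mathrm{id}_{[0,1]}$ together with coefficients $v^0 = U$ and $v^1 = \Theta - U$ exhibits $\gamma$ as a polynomial path up to reparametrization in the sense of the definition, and $\gamma(1) = \Theta \in St(n,H) \cap S$. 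The hypothesis of Corollary \ref{CorSt(n,H)CapSDenseInS} is therefore satisfied, and I conclude that $St(n,H) \cap S$ is dense in $S$; transporting back via \textbf{Transpose} yields density of $\mathcal{F}_{(X,\mu),n}^\mathbb{F} \cap T^{-1}(\{d\})$ in $T^{-1}(\{d\})$.

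I expect no genuine obstacle here: the argument is essentially a formality that composes the two cited corollaries, with affineness of $T^{-1}(\{d\})$ providing the trivial degree-one polynomial path required. The only small bookkeeping is to keep the isometric identification between $L^2(X,\mu;\mathbb{F}^n)$ and $H^n$ explicit so as to legitimately pass between the frame formulation of the statement and the Stiefel-manifold formulation in which Corollary \ref{CorSt(n,H)CapSDenseInS} is stated.
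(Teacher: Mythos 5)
Your proposal is correct and follows essentially the same route as the paper, which likewise obtains a frame in $T^{-1}(\{d\})$ from Corollary \ref{CorT^(-1)(d)ContainsFrameTArbitraryLinearFormCondDimL2X} and then invokes Corollary \ref{CorSt(n,H)CapSDenseInS} using the affineness of $T^{-1}(\{d\})$ (the paper treats the affine case as an instance of the star-domain remark following that corollary, exactly the straight-line path you write down). Your version merely makes explicit the degree-one polynomial path and the identification via \textbf{Transpose}, which the paper leaves implicit.
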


\begin{corollary}
Let $(X,\Sigma,\mu)$ be a measure space, $d \in \mathbb{C}^n$, and $h \in L^2(X,\mu;\mathbb{F})$ such that $T^{-1}(\{d\})$ contains a continuous frame $\Phi = (\varphi_x)_{x \in X}$ (see for instance corollary \ref{CorT^(-1)(d)ContainsFrameEachCordTDependsSeparatelyOnOneVarCondDimL2X} and proposition \ref{PropT^(-1)(d)ContainsFrame}), where 
\[ T : \begin{cases} L^2(X,\mu;\mathbb{F}^n) &\to \mathbb{F}^n \\ F = (f_x)_{x \in X} &\mapsto \int_X h(x)f_x d\mu(x) \end{cases}. \]
Then since $T^{-1}(\{d\})$ is affine, by corollary \ref{CorSt(n,H)CapSDenseInS}, $\mathcal{F}_{(X,\Sigma,\mu),n}^\mathbb{F} \cap T^{-1}(\{d\})$ is dense in $T^{-1}(\{d\})$.
\end{corollary}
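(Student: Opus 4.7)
The plan is to verify that the set $S := T^{-1}(\{d\})$ satisfies the hypotheses of corollary \ref{CorSt(n,H)CapSDenseInS}, which immediately yields the density statement. Recall that by proposition \ref{PropStiefelIsometricFrames} and remark \ref{RemarkIsometry}, the set $\mathcal{F}_{(X,\mu),n}^\mathbb{F}$ corresponds under the \textbf{Transpose} isometry to $St(n,L^2(X,\mu;\mathbb{F}))$, so the corollary \ref{CorSt(n,H)CapSDenseInS} applies in this setting with $H = L^2(X,\mu;\mathbb{F})$.

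First, I would observe that because $T$ is linear, $T^{-1}(\{d\})$ is an affine subspace of $L^2(X,\mu;\mathbb{F}^n)$: if $U, V \in T^{-1}(\{d\})$ and $t \in [0,1]$, then $T((1-t)U + tV) = (1-t)T(U) + tT(V) = (1-t)d + td = d$. By hypothesis, there exists at least one $\Phi \in \mathcal{F}_{(X,\mu),n}^\mathbb{F} \cap T^{-1}(\{d\})$, so the translate $\Theta(U) := \Phi$ can serve uniformly for all $U \in S$ as a base frame in $S$.

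Next, for any $U \in T^{-1}(\{d\})$, I would exhibit the straight-line path
\[ \gamma : t \mapsto (1-t)U + t\Phi = U + t(\Phi - U), \]
which is a polynomial path (of degree $\leq 1$) up to reparametrization joining $U = \gamma(0)$ to $\Phi = \gamma(1)$, and which is entirely contained in $S$ by the affine-closure argument above. Since $\gamma(1) = \Phi \in St(n,L^2(X,\mu;\mathbb{F})) \cap S$ (via the isometric identification), the hypothesis of corollary \ref{CorSt(n,H)CapSDenseInS} is satisfied with $a_\gamma = 1$.

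Applying corollary \ref{CorSt(n,H)CapSDenseInS} to $S = T^{-1}(\{d\})$ then yields that $St(n,L^2(X,\mu;\mathbb{F})) \cap T^{-1}(\{d\})$ is dense in $T^{-1}(\{d\})$, which by the \textbf{Transpose} isometry translates into the density of $\mathcal{F}_{(X,\mu),n}^\mathbb{F} \cap T^{-1}(\{d\})$ in $T^{-1}(\{d\})$. There is no real obstacle here: the whole statement is really a bookkeeping exercise combining the affine structure of fibers of a linear map with the polynomial-path density lemma, and the only nontrivial existence input (a frame in $T^{-1}(\{d\})$) is handed to us by hypothesis (and, in practice, supplied by corollary \ref{CorT^(-1)(d)ContainsFrameEachCordTDependsSeparatelyOnOneVarCondDimL2X} and proposition \ref{PropT^(-1)(d)ContainsFrame}).
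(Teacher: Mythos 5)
Your proposal is correct and follows exactly the route the paper intends: the paper's justification is precisely that $T^{-1}(\{d\})$ is an affine subspace containing a frame $\Phi$, so the straight-line (degree-$1$ polynomial) path from any $U$ to $\Phi$ stays in the fiber and Corollary \ref{CorSt(n,H)CapSDenseInS} applies. The only difference is that you spell out the affine-closure computation and the choice $a_\gamma = 1$ explicitly, which the paper leaves implicit in its remark that affine subsets meeting $St(n,H)$ satisfy the hypothesis of that corollary.
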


\begin{corollary}
Let $(X,\Sigma,\mu)$ be a measure space, $l \in \mathbb{N}^*$, $(X_j)_{j \in [\![1,l]\!]}$ a partition of $X$ by measurable subsets, $h \in L^2(X,\mu;\mathbb{F})$, and $D \in \mathbb{F}^n$ such that $W^{-1}(\{D\})$ contains a continuous frame $\Phi = (\varphi_x)_{x \in X}$ (see for instance proposition \ref{PropW^(-1)(D)ContainsFrame}), where 
\[ W : \begin{cases} L^2(X,\mu;\mathbb{F}^n) &\to \prod_{j \in [\![1,l]\!]} \mathbb{F}^n \\ F = (f_x)_{x \in X} &\mapsto (\int_{X_j} h(x)f_x)_{j \in [\![1,l]\!]} \end{cases}. \]
Then since since $W^{-1}(\{d\})$ is affine, and by corollary \ref{CorSt(n,H)CapSDenseInS}, $\mathcal{F}_{(X,\Sigma,\mu),n}^\mathbb{F} \cap W^{-1}(\{D\})$ is dense in $W^{-1}(\{D\})$.
\end{corollary}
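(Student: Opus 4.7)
The plan is to apply corollary \ref{CorSt(n,H)CapSDenseInS} directly to the set $S := W^{-1}(\{D\})$, after transporting everything through the isometry of proposition \ref{PropStiefelIsometricFrames} that identifies $\mathcal{F}_{(X,\mu),n}^\mathbb{F}$ with $St(n,L^2(X,\mu;\mathbb{F}))$. In view of remark \ref{RemarkIsometry}, a density statement for $\mathcal{F}_{(X,\mu),n}^\mathbb{F}\cap W^{-1}(\{D\})$ inside $W^{-1}(\{D\})$ is equivalent to a density statement for $St(n,L^2(X,\mu;\mathbb{F}))\cap S$ inside $S$, which is exactly the conclusion produced by that corollary, provided its hypotheses are checked.

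First I would observe that $W$ is $\mathbb{F}$-linear, since each component $F\mapsto \int_{X_j}h(x)f_x\,d\mu(x)$ is a linear map from $L^2(X,\mu;\mathbb{F}^n)$ to $\mathbb{F}^n$. Consequently $W^{-1}(\{D\})$ is either empty or an affine subspace of $L^2(X,\mu;\mathbb{F}^n)$; by the standing assumption it contains the continuous frame $\Phi$, so in particular $St(n,L^2(X,\mu;\mathbb{F}))\cap S\neq\emptyset$.

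The crucial hypothesis of corollary \ref{CorSt(n,H)CapSDenseInS} is the existence, for every $U\in S$, of a polynomial path up to reparametrization contained in $S$ that joins $U$ to some element of $St(n,L^2(X,\mu;\mathbb{F}))\cap S$. I would produce such a path as the straight segment
\[ \gamma : [0,1]\to L^2(X,\mu;\mathbb{F}^n),\qquad \gamma(t)=(1-t)\Phi+tU. \]
This is a polynomial path of degree at most one, hence a polynomial path up to reparametrization with $\phi=\mathrm{Id}$. By linearity of $W$,
\[ W(\gamma(t))=(1-t)W(\Phi)+tW(U)=(1-t)D+tD=D, \]
so $\mathrm{Range}(\gamma)\subseteq W^{-1}(\{D\})=S$. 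Finally $\gamma(0)=\Phi$, which under the isometry lies in $St(n,L^2(X,\mu;\mathbb{F}))\cap S$, supplying the required $\Theta(U)$.

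With all hypotheses verified, corollary \ref{CorSt(n,H)CapSDenseInS} gives the density of $St(n,L^2(X,\mu;\mathbb{F}))\cap S$ in $S$, and translating back via the isometry yields the desired density of $\mathcal{F}_{(X,\mu),n}^\mathbb{F}\cap W^{-1}(\{D\})$ in $W^{-1}(\{D\})$. There is no substantive obstacle: the argument is entirely formal once one notes that $W$ is linear, that $W^{-1}(\{D\})$ is therefore closed under affine combinations, and that straight segments are the simplest instance of polynomial paths. The only subtlety worth stating explicitly is the translation via the isometry of proposition \ref{PropStiefelIsometricFrames}, which is why I mention it at the outset.
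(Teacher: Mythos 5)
Your proof is correct and follows the same route the paper intends: note that $W$ is linear so $W^{-1}(\{D\})$ is an affine subspace containing the frame $\Phi$, take the straight segment from $\Phi$ to an arbitrary $U$ as the required polynomial path, and invoke Corollary \ref{CorSt(n,H)CapSDenseInS} (this is exactly the ``affine subspace containing some $\Theta \in St(n,H)\cap S$'' case the paper singles out right after that corollary). The only difference is that you spell out the verification the paper leaves implicit.
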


\begin{remark}
Consider the function $q$ of proposition \ref{Propq^(-1)(d)ContainsFrame}. If $\Phi = (\varphi_x)_{x \in X} \in q^{-1}(\{0\})$, then we also have $\Phi \in q^{-1}(\{0\}) \cap (q^{-1}(\{0\}) - \Phi)$ since $q(2\Phi)=0$.
\end{remark}

\begin{proposition}
\label{Propq-1(0)Cap(q-1(0)-Phi)PolynomialPathConnected}
Consider the function $q$ of proposition \ref{Propq^(-1)(d)ContainsFrame}. Let $\Phi = (\varphi_x)_{x \in X} \in q^{-1}(\{0\})$ and $U = (u_x)_{x \in X} \in q^{-1}(\{0\}) \cap (q^{-1}(\{0\}) - \Phi)$. Then for all $\lambda,\mu \in \mathbb{R}$, $\lambda \Phi + \mu U \in q^{-1}(\{0\}) \cap (q^{-1}(\{0\}) - \Phi)$. \\
In particular,  $q^{-1}(\{0\}) \cap (q^{-1}(\{0\}) - \Phi)$ is a star domain relatively to $\Phi$.
\end{proposition}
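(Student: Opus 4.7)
The plan is to exploit the fact that $q$ is a homogeneous quadratic form and reduce the problem to manipulating its associated symmetric bilinear form. First I would expand
\[
q(F+G) = \int_X h(x) \langle b, f_x + g_x \rangle (f_x + g_x)\, d\mu(x) = q(F) + q(G) + B(F,G)
\]
where
\[
B(F,G) := \int_X h(x)\bigl(\langle b, f_x \rangle g_x + \langle b, g_x \rangle f_x\bigr) d\mu(x).
\]
The map $B$ is symmetric and bilinear, and a scaling check gives $q(\lambda F) = \lambda^2 q(F)$ for every scalar $\lambda$, so more generally $q(\lambda F + \mu G) = \lambda^2 q(F) + \mu^2 q(G) + \lambda\mu\, B(F,G)$. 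This polarization identity is the whole engine of the proof.

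Next I would translate the hypotheses into vanishing statements for $q$ and $B$. By assumption $q(\Phi) = 0$ and $q(U) = 0$; the condition $U \in q^{-1}(\{0\}) - \Phi$ means $U + \Phi \in q^{-1}(\{0\})$, i.e.\ $q(U + \Phi) = 0$. Applying the polarization identity with $\lambda=\mu=1$ yields $0 = q(U+\Phi) = q(U) + q(\Phi) + B(U,\Phi) = B(U,\Phi)$, so the cross term vanishes: $B(\Phi, U) = 0$.

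Finally I would verify the two required memberships by direct computation. For membership in $q^{-1}(\{0\})$:
\[
q(\lambda \Phi + \mu U) = \lambda^2 q(\Phi) + \mu^2 q(U) + \lambda\mu\, B(\Phi, U) = 0.
\]
For membership in $q^{-1}(\{0\}) - \Phi$, one must check that $(\lambda+1)\Phi + \mu U \in q^{-1}(\{0\})$:
\[
q\bigl((\lambda+1)\Phi + \mu U\bigr) = (\lambda+1)^2 q(\Phi) + \mu^2 q(U) + (\lambda+1)\mu\, B(\Phi, U) = 0.
\]
Hence $\lambda\Phi + \mu U \in q^{-1}(\{0\}) \cap (q^{-1}(\{0\}) - \Phi)$ for all $\lambda, \mu \in \mathbb{R}$. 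Specializing to $\lambda = 1 - t$, $\mu = t$ for $t \in [0,1]$ shows that the segment $[\Phi, U]$ lies in the set, and since $\Phi$ itself belongs to it (because $q(2\Phi) = 4\, q(\Phi) = 0$), the set is a star domain with respect to $\Phi$. There is no real obstacle here beyond correctly setting up the polarization; the only subtlety is to notice that the hypothesis $U \in q^{-1}(\{0\}) - \Phi$ is exactly what kills the cross term $B(\Phi, U)$, after which everything else collapses to an algebraic triviality.
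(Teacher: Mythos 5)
Your proof is correct and follows essentially the same route as the paper's: both polarize $q$ to express the cross term as $q(\Phi+U)-q(\Phi)-q(U)$, which vanishes because $U+\Phi\in q^{-1}(\{0\})$, and then evaluate $q(\lambda\Phi+\mu U)$ and $q((\lambda+1)\Phi+\mu U)$ directly. The only cosmetic slip is the claim that $q(\lambda F)=\lambda^2 q(F)$ for \emph{every} scalar and that $B$ is bilinear --- since $\langle b,\cdot\rangle$ is conjugate-linear, the form is sesquilinear and these identities hold only for real scalars, which is precisely why the statement restricts to $\lambda,\mu\in\mathbb{R}$; as your argument only ever uses real scalars, nothing breaks.
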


\begin{proof}
Let $\lambda,\mu \in \mathbb{R}$. Let $s$ be the sesquilinear form $\begin{cases} L^2(X,\mu;\mathbb{C}^n) &\to \mathbb{C}^n \\ (F,G) &\mapsto \int_X \langle b , g_x \rangle f_x d\mu(x) \end{cases}$. We have
\begin{align*} 
q(\lambda \Phi + \mu U) &= \lambda^2 \underbrace{q(\Phi)}_{0} + \lambda \mu ( s(\Phi,U) + s(U,\Phi)) + \mu^2 \underbrace{q(U)}_{0} \\
&= \lambda \mu (\underbrace{q(\Phi+U)}_{0} - \underbrace{q(\Phi)}_{0} - \underbrace{q(U)}_{0}) \\
&= 0.
\end{align*}
We can show similarly that $q((\lambda+1)\Phi + \mu U)=0$.
\end{proof}

\begin{corollary}
Let $(X,\Sigma,\mu)$ be a $\sigma$-finite measure space, $b \in \mathbb{C}^n \setminus \{0\}$, and $h \in L^\infty(X,\mu;\mathbb{C})$ such that $q^{-1}(\{0\})$ contains a continuous frame $\Phi = (\varphi_x)_{x \in X}$ (see for instance proposition \ref{Propq^(-1)(d)ContainsFrame}), where 
\[ q : \begin{cases} L^2(X,\mu;\mathbb{C}^n) &\to \mathbb{C}^n \\ F = (f_x)_{x \in X} &\mapsto \int_X h(x) \langle b , f_x \rangle f_x d\mu(x) \end{cases}. \]
Then by proposition \ref{Propq-1(0)Cap(q-1(0)-Phi)PolynomialPathConnected} and corollary \ref{CorSt(n,H)CapSDenseInS}, $\mathcal{F}_{(X,\Sigma,\mu),n}^\mathbb{C} \cap q^{-1}(\{0\}) \cap (q^{-1}(\{0\}) - \Phi)$ is dense in $q^{-1}(\{0\}) \cap (q^{-1}(\{0\}) - \Phi)$.
\end{corollary}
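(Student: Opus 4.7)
The plan is to set $S := q^{-1}(\{0\}) \cap (q^{-1}(\{0\}) - \Phi)$ and then verify the hypothesis of corollary \ref{CorSt(n,H)CapSDenseInS} for $S$, transported to the frame side via the isometry of proposition \ref{PropStiefelIsometricFrames}. Concretely, since that isometry identifies $\mathcal{F}_{(X,\mu),n}^{\mathbb{C}}$ with $St(n,L^2(X,\mu;\mathbb{C}))$ as subsets of $L^2(X,\mu;\mathbb{C}^n)$ (after applying \textbf{Transpose}), proving that $St(n,L^2(X,\mu;\mathbb{C})) \cap S$ is dense in $S$ is equivalent to the claimed density.

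First I would record that $\Phi$ itself is a distinguished base point of $S$ lying in the frame set: by assumption $\Phi \in q^{-1}(\{0\})$; and since $q$ is quadratic-homogeneous, $q(2\Phi)=4\, q(\Phi)=0$, so $2\Phi \in q^{-1}(\{0\})$, i.e.\ $\Phi \in q^{-1}(\{0\}) - \Phi$. Hence $\Phi \in \mathcal{F}_{(X,\mu),n}^{\mathbb{C}} \cap S$, which in particular shows $St(n,L^2(X,\mu;\mathbb{C})) \cap S \neq \emptyset$, a prerequisite noted after corollary \ref{CorSt(n,H)CapSDenseInS}.

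Next, for an arbitrary $U \in S$, I would produce the required polynomial path from $U$ to some $\Theta(U) \in St(n,L^2(X,\mu;\mathbb{C})) \cap S$, and take $\Theta(U) := \Phi$. The candidate path is the straight segment
\[
\gamma : [0,1] \to L^2(X,\mu;\mathbb{C}^n), \qquad \gamma(t) = (1-t)\,U + t\,\Phi,
\]
with $\gamma(0)=U$ and $\gamma(1)=\Phi$. This is a polynomial path (up to reparametrization, with $\phi = \mathrm{id}_{[0,1]}$, degree $q=1$, coefficients $v^0 = U$ and $v^1 = \Phi - U$). Proposition \ref{Propq-1(0)Cap(q-1(0)-Phi)PolynomialPathConnected} applied with the real scalars $\lambda = t$ and $\mu = 1-t$ (allowed because both lie in $\mathbb{R}$) yields $\gamma(t) = t\Phi + (1-t)U \in S$ for every $t \in [0,1]$, so $\mathrm{Range}(\gamma) \subseteq S$.

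Finally, the hypothesis of corollary \ref{CorSt(n,H)CapSDenseInS} is met for $S$: every $U \in S$ is joined, inside $S$, by a polynomial path to $\Phi \in St(n,L^2(X,\mu;\mathbb{C})) \cap S$. Applying the corollary (with $H = L^2(X,\mu;\mathbb{C})$) gives that $St(n,L^2(X,\mu;\mathbb{C})) \cap S$ is dense in $S$, and transporting back through \textbf{Transpose} (which is an isometry of $L^2(X,\mu;\mathbb{C}^n)$ onto $L^2(X,\mu;\mathbb{C})^n$ preserving both subsets) yields that $\mathcal{F}_{(X,\mu),n}^{\mathbb{C}} \cap S$ is dense in $S$. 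There is no real obstacle here; the only subtle points are the verification $\Phi \in q^{-1}(\{0\}) - \Phi$ (which is where the quadratic nature of $q$ is used) and the observation that the segment $\gamma$ sits in $S$ because proposition \ref{Propq-1(0)Cap(q-1(0)-Phi)PolynomialPathConnected} allows arbitrary real coefficients, not just convex ones.
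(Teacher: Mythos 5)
Your proposal is correct and follows exactly the route the paper intends: you verify $\Phi \in q^{-1}(\{0\}) \cap (q^{-1}(\{0\}) - \Phi)$ via $q(2\Phi)=0$ (the paper's preceding remark), use Proposition \ref{Propq-1(0)Cap(q-1(0)-Phi)PolynomialPathConnected} to see that the straight segment from any $U$ in the set to $\Phi$ stays inside it (the star-domain property), and then invoke Corollary \ref{CorSt(n,H)CapSDenseInS}, transporting between the frame and Stiefel pictures by the isometry of Proposition \ref{PropStiefelIsometricFrames}. This is precisely the argument the paper compresses into its citation of those two results, with the details filled in correctly.
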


\section*{Acknowledgement}
The first author is financially supported by the \textit{Centre National pour la Recherche Scientifique et Technique} of Morocco.

\section*{Conflict of interest}
On behalf of all authors, the corresponding author states that there is no conflict of interest.

\nocite{*}
\bibliographystyle{plain}
\bibliography{references}

\Addresses

\end{document}